\theoremstyle{plain}
\newtheorem{theorem}{Theorem}[subsection]
\newtheorem{proposition}[theorem]{Proposition}
\newtheorem{lemma}[theorem]{Lemma}
\theoremstyle{remark}
\newtheorem{rema}[theorem]{Remark}
\theoremstyle{definition}
\newtheorem{definition}[theorem]{Definition}
\newcommand\ob{^{-1}}
\newcommand\nf{\normalfont}
\newcommand\zop{{\mathbb{Z}[\frac{1}{p}]}}
\DeclareMathOperator{\obj}{Obj}
\DeclareMathOperator{\id}{id}
\newcommand\cu{\underline{C}}
\newcommand\du{{\underline{D}}}
\newcommand\eu{{\underline{E}}}
\newcommand\hw{{\underline{Hw}}}
\newcommand\Z{{\mathbb{Z}}}
\newcommand\Q{{\mathbb{Q}}}
\newcommand\p{\mathbb{P}}
\newcommand\ns{\{0\}}
\DeclareMathOperator\chow{{Chow}}
\DeclareMathOperator\spe{Spec}
\DeclareMathOperator\imm{{Im}}
\newcommand\dm{\mathcal{DM}}
\newcommand\dmc{\dm_c}
\newcommand\dms{\dm(S)}
\newcommand\dmcs{\dm_c(S)}
\newcommand\dmx{\dm(X)}
\newcommand\dmcx{\dm_c(X)}
\newcommand\dmck{\dm_c(K)}
\newcommand\dmy{\dm(Y)}
\newcommand\dmgm{\dm_{gm}}
\newcommand\chows{\chow(S)}
\newcommand\hwchow{{\protect\underline{Hw}}_{\chow}}
\newcommand\hwchows{{\hwchow(S)}}
\newcommand\wchowb{{w_{\chow}^{big}}}
\newcommand\op{\mathcal{OP}}
\newcommand\on{\mathcal{ON}}
\newcommand\oz{\mathcal{OZ}}
\newcommand\opx{\op(X)}
\newcommand\onx{\on(X)}
\newcommand\ozx{\oz(X)}
\newcommand\opal{\op(\alpha)}
\newcommand\onal{\on(\alpha)}
\newcommand\mgbms{\mathcal{M}^{BM}_S}
\newcommand\mgbmy{\mathcal{M}^{BM}_Y}
\newcommand\mgbm{\mathcal{M}^{BM}}
\newcommand\mgbmx{\mathcal{M}^{BM}_X}
\newcommand\z{{\mathbb{Z}}}
\numberwithin{equation}{section}
\begin{document}
\title[Chow weight  structures for cdh-motives 
]{On Chow weight  structures for $cdh$-motives with integral coefficients
}
\author{Mikhail V. Bondarko, Mikhail A. Ivanov}
\thanks{The work is supported by RFBR (grants no. 14-01-00393A and 15-01-03034A). The first author is also grateful to the Dmitry Zimin's Foundation "Dynasty".
} 
\begin{abstract} 
The main goal of this paper is to define a certain {\it Chow weight
structure} $w_{\chow}$ on the category
$\dmcs$ of (constructible) $cdh$-motives over an equicharacteristic
scheme $S$. 
In contrast to the
previous papers of D. H\'ebert and the first author on weights for
relative motives (with rational coefficients), 
we can 
 achieve our goal for motives
with integral coefficients (if $\operatorname{char} S=0$; if $\operatorname{char} S=p>0$ then we
consider motives with $\Z[\frac{1}{p}]$-coefficients). We prove that the
properties of the Chow weight structures that were previously
established for $\Q$-linear motives can be carried over to this "integral"
context (and we generalize some of them using certain new methods). In this paper we mostly study
the version of $w_{\chow}$ defined via "gluing from strata"; this enables
us to define  Chow weight structures 
for a wide class of base schemes.

As a consequence, we certainly obtain certain (Chow)-weight spectral
sequences and filtrations on any (co)homology of motives.

\end{abstract}

\maketitle 
\tableofcontents
 \section*{Introduction}
In this paper we construct certain "weights" for $R$-linear motives over a scheme $S$.
  Here $S$ is an excellent finite dimensional Noetherian
scheme of characteristic
$p$ (that could be $0$) and $R$ is a unital commutative associative
coefficient ring; in the case  $p>0$ we require $p$ to be invertible
in $R$. These weights are compatible with Deligne's weights for 
constructible complexes of \'etale sheaves (see Remark \ref{rintel}(4) below).

Now we explain this in more detail.
 Deligne's weights for \'etale sheaves and mixed Hodge structures (and for the corresponding derived categories) are
very important for modern algebraic geometry.
So, lifting these weights to motives is an important part of  the so-called
Beilinson's (motivic) dream.
 The 'classical' approach (due to Beilinson) to 
do this is to define a filtration on motives that would split Chow
motives into their components corresponding to single (co)homology
groups (i.e., it should yield the so-called Chow-Kunneth
decompositions). Since  the existence of Chow-Kunneth decompositions
is very much conjectural,
 it is no wonder  that this approach 
was not really successful (up to now); besides, it cannot work for $R$-linear motives if $R$ is not a $\Q$-algebra.

  In \cite{bws} an alternative method for defining  weights for motives
was proposed and successfully implemented: the so-called Chow weight
structure on the triangulated category of Voevodsky motives $\dmgm$ (with integral coefficients) over
a characteristic $0$ field was defined; the {\it heart} of this weight structure is the "classical" category of Chow motives. Now, arbitrary weight
structures yield functorial weight
filtrations and weight spectral sequences for any (co)homological
functor (from $\dmgm$). These weight filtrations and spectral sequences generalize
Deligne's ones; note that they are also well-defined for any
(co)homology with integral coefficients (this is a vast extension of
the earlier results of \cite{gs} on cohomology with compact support)!

The next paper in this direction was \cite{bmres}, where the Chow weight
structure for $\zop$-linear motives over a perfect field of
characteristic $p$ was defined. At the same time, the theory of
Voevodsky triangulated motivic categories over any "more or less general" base scheme
$S$ was (in \cite{degcis}) developed to the stage that the Chow weight
structure for Beilinson motives over $S$ (i.e., for $S$-motives with
rational coefficients) could be defined; the latter was done
independently in \cite{hebpo} and in \cite{brelmot} (see Remark \ref{rchow}(2) below).

The main goal of the current paper is to define the Chow weight
structure on $\zop$-linear motives (and more generally, on
$R$-linear motives for any $\zop$-algebra $R$) over any excellent  finite dimensional Noetherian base
scheme $S$
of characteristic $p$ (here we set $\zop=\Z$ if $p=0$, and consider
$cdh$-motives with $R$-coefficients that were denoted by $\dm_{cdh}(S,R)$ in \cite{cdintmot}).
To achieve this we 
 use the "gluing
construction" of
the Chow weight structure; this construction  was described (for $\Q$-linear motives)
in \cite{brelmot}*{\S2.3} (whereas the method was first proposed in \cite{bws}*{\S8.2}). This required us to develop some new methods for
studying morphisms between relative motives (in \S1.3).
We also note that all the properties and
applications of the Chow weight structure described in \cite{brelmot} carry
over to our "integral" context. We 
 apply some new arguments for 
 studying the weight-exactness of motivic functors (in \S\ref{sfwchow}; following \cite{brhtp}, we use {\it Borel-Moore motives});
they allow us to extend the corresponding results to 
 not necessarily quasi-projective morphisms. 

Thus this paper gives  convenient tools for
studying "integral" (and torsion) weight phenomena for (equicharacteristic) schemes and motives. 
In particular, we obtain functorial "Chow-weight" filtrations and spectral sequences (see \S\ref{schws}). 
Note still that we are able to prove that "explicit Chow motives" over $S$
yield a weight structure for $S$-motives (similarly to \cite{hebpo}
and \cite{brelmot}*{ \S2.1}) only if $S$ is a "smooth limit" of  schemes of finite
type over some field (see \S\ref{stmain} and \cite{jin}).

Lastly, we note that one can (certainly) consider motivic categories corresponding to Grothendieck topologies distinct from the $cdh$ one.
In particular, a (not really "successful")  attempt was made
in \cite{bondint} to construct certain Chow weight structures on relative Nisnevich motivic categories (using their properties established in \cite{degcis}). Note yet that the Nisnevich motives are isomorphic to the $cdh$-ones over regular bases (see \cite{cdintmot}*{Corollary 5.9}); this is also expected to be true in general. On the other hand, though ($R$-linear) \'etale motivic categories (that were thoroughfully studied in \cite{cdet}) enjoy several "nice" properties,  there is no chance to define $w_{\chow}$ for 
them unless $\Q\subset R$, whereas in the latter case the relative motivic categories mentioned 
"do not depend on the choice of a topology" (if we compare $cdh$, Nisnevich, \'etale, and $h$-motives).

The authors are deeply grateful to prof. F. Deglise for his very helpful explanations. The first author 
expresses his gratitude to  Unit\'e de math\'ematiques pures et appliqu\'ees of
\'Ecole normale sup\'erieure de Lyon for the wonderful working conditions in January of 2015.

\section{Preliminaries}\label{sprem}
This section is mostly a  recollection of   basics on (relative $cdh$-)motives and weight structures; yet the results of \S\ref{sortl}
and the methods of their proofs are (more or less) new.

\subsection{Notation}\label{snotata}
\begin{itemize}
\item For categories $C$, $D$ we write $D\subset C$ if $D$ is a full subcategory of $C$.

 \item For a category $C$, and $X,Y\in\obj C$, we denote by $C(X,Y)$ the set of  $C$-morphisms from  $X$ to $Y$.

 \item An additive subcategory $D$ of $C$ is 
 said to be \emph{Karoubi-closed} in it if it contains all retracts  of its objects in $C$. 
The full subcategory of $C$ whose objects are all retracts of objects of $D$ (in $C$) will be called the \emph{Karoubi-closure} of $D$ in $C$.

\item $\cu$ will always denote some triangulated category; usually it will be endowed with a weight structure $w$ (see Definition \ref{dwstr} below).

\item For a set of objects $C_i\in\obj\cu$, $i\in I$, we will denote by $\langle C_i\rangle$ the smallest strictly full triangulated subcategory of $\underline{C}$ containing all $C_i$; for
$D\subset \cu$ we will write $\langle D\rangle$ instead of $\langle \obj D\rangle$. 
We will call the  Karoubi-closure of $\langle C_i\rangle$ in $\cu$ the  \emph{ triangulated category  generated by $C_i$} (recall that it is indeed triangulated).

\item For $X,Y\in \obj \cu$ we will write $X\perp Y$ if $\cu(X,Y)=\ns$.
For $D,E\subset \obj \cu$ we will write $D\perp E$ if $X\perp Y$ for all $X\in D$, $Y\in E$.
For $D\subset \cu$ we will denote by $D^\perp$ the class 
$$
\{Y\in \obj \cu\ :\ X\perp Y\ \forall X\in D\}.
$$
Dually, ${}^\perp{}D$ is the class $\{Y\in \obj \cu\ :\ Y\perp X\ \forall X\in D\}$.

\item We will say that some $C_i\in\obj\cu$, $i\in I$, \emph{weakly generate} $\cu$ if for $X\in\obj\cu$ we have: $\cu(C_i[j],X)=\ns\ \forall i\in I,\ j\in\Z\implies X=0$ (i.e., if $\{C_i[j]:\ j\in \Z\}^\perp$ contains only zero objects).

\item $M\in \obj \cu$ will be called compact if the functor $\cu(M,-)$
commutes with all small coproducts that exist in $\cu$ 
(we will only consider compact objects in those categories that are closed with respect to arbitrary small coproducts).

\item $D\subset \obj \cu$  will be called \emph{extension-stable} if $0\in D$ and for any distinguished triangle $A\to B\to C$ in $\cu$ we have: $A,C\in D\implies B\in D$.

\item We will call the smallest Karoubi-closed extension-stable subclass of $\obj\cu$ containing $D$ the \emph{envelope} of $D$.

\item We will sometimes need certain stratifications of a scheme $S$. Recall that  a  stratification  $\alpha$  is a presentation of  $S$ as $\cup S_\ell^\alpha$, where $S_\ell^\alpha$, $1\le \ell\le n$, are pairwise disjoint locally closed subschemes of $S$. Omitting $\alpha$, we will denote by $j_\ell:S_\ell^\alpha\to S$  the corresponding immersions.
 We do not demand the closure of each $S_\ell^\alpha$ to be  the union of strata (though we could do this); we will only assume that each $S_\ell^{\alpha}$ is open in $\cup_{i\ge \ell} S_i^{\alpha}$. 
 
\item Below we will identify a Zariski point (of a scheme $S$) with the spectrum of its residue field.

\item $k$ is a prime field, $p=\operatorname{char} k$ ($p$ may be $0$). 
\item 
All the schemes we consider will be excellent, separated, Noetherian  $k$-schemes (i.e., characteristic $p$ schemes) of finite Krull dimension (so, a "scheme" will always mean a scheme of this sort).

\item A variety over a field $F/k$ is a (separated) reduced scheme of finite type over $\spe F$. 

\item $S_{red}$ will denote the reduced scheme associated with $S$.

\item All morphisms of  schemes considered below will be separated.
They will also mostly be  of finite type. 

\item Throughout the paper $R$ will be some fixed  unital associative commutative  algebra over $\Z[\frac1p]$ (we set $\Z[\frac1p]=\Z$ if $p=0$).
\end{itemize}

\subsection{On $cdh$-motives (after Cisinski and Deglise)}\label{sbrmot}

We list some of the  properties of the triangulated categories of $
cdh$-motives (those are  certain relative Voevodsky motives 
with $R$-coefficients described by Cisinski and Deglise).  
They are very much similar to the properties of Beilinson motives (i.e., of $\Q$-linear ones) that were established in \cite{degcis} (and applied in \cite{hebpo} and \cite{brelmot} for the construction of the corresponding Chow weight structures).

\begin{theorem}\label{pcisdeg}
Let $X$, $Y$ be  {\nf(}Noetherian finite dimensional excellent characteristic $p$\/{\nf)} schemes{\nf\/;} $f:X\to Y$ is a  {\nf(}separated{\nf\/)}  morphism of finite type. 
\begin{enumerate}
\item\label{imotcat}
For any  $X$ there exists a tensor triangulated $R$-linear category $\dmx$  with the unit object $R_X$ {\nf(}in \cite{cdintmot}*{Definition 1.5} this category  was denoted by $\operatorname{DM}_{cdh}(X,R))${\nf\/;} it is closed with respect to arbitrary small coproducts.

\item\label{imotgen}
The {\nf(}full\/{\nf)} subcategory $\dmcx\subset \dmx$ of compact objects is tensor triangulated, and  $R_X\in \obj \dmcs$. $\dmcx$ weakly generates $\dmx$.

\item\label{imotfun} 
For any  $f$ the following functors are defined{\nf\/:} $f^*: \dm(Y) \leftrightarrows \dmx:f_*$ and $f_!: \dmx \leftrightarrows \dmy:f^!${\nf;} 
$f^*$ is left adjoint to $f_*$ and $f_!$ is left adjoint to $f^!$.

\noindent We call these {\bf the motivic image functors}. Any of them {\nf(}when $f$ varies\/{\nf)} yields a  $2$-functor from the category of {\nf(}separated finite-dimensional excellent characteristic $p$\/{\nf)} schemes with  morphisms of finite type to the $2$-category of triangulated categories. Besides, all motivic image functors preserve compact objects {\nf(}i.e., they can be restricted to the subcategories $\dmc(-));$ they also commute with arbitrary {\nf(}small\/{\nf)} coproducts. 

\item\label{iexch} 
For a Cartesian square of  morphisms of finite type 
$$\begin{CD}
X'@>{f'}>>Y'\\
@VV{g'}V@VV{g}V \\
X@>{f}>>Y
\end{CD}$$
we have $g^*f_!\cong f'_!g'{}^*$ and $g'_*f'{}^!\cong f^!g_*$.

\item\label{itate}
For any $X$ there exists a Tate object $R(1)\in\obj\dmcx;$ tensoring by it yields an  exact Tate twist functor $-(1)$ on $\dmx$.
This functor is an auto-equivalence of $\dmx;$ we will denote the inverse functor by $-(-1)$.
Tate twists commute with all motivic image functors mentioned {\nf(}up to an isomorphism of functors\/{\nf)}.
Besides, for $X=\p^1(Y)$ there is a functorial isomorphism $f_!(R_{\p^1(Y)})\cong R_Y\bigoplus R_Y(-1)[-2]$.

\item\label{iupstar}  $f^*$ is symmetric monoidal{\nf\/;} $f^*(R_Y)=R_X$.

\item \label{ipur}
$f_*\cong f_!$ if $f$ is proper{\nf\/;} $f^!(-)\cong f^*(-)(s)[2s]$  if $f$ is smooth  {\nf(}everywhere\/{\nf)} of relative dimension $s$. 
If $f$ is an open immersion, we just have $f^!=f^*$.

\item \label{ipura} 
If $i:S'\to S$ is an immersion of regular schemes everywhere of codimension $d$, then $R_{S'}(-d)[-2d]\cong i^!(R_S)$.

\item\label{iglu}
If $i:Z\to X$ is a closed immersion, $U=X\setminus Z$, $j:U\to X$ is the complementary open immersion, then the motivic image functors yield a {\it gluing datum} for $\dm(-)$  {\nf(}in the sense of \cite{bbd}*{\S1.4.3}{\nf;} see also \cite{bws}*{Definition 8.2.1}{\nf)}. That means that {\nf(}in addition to the adjunctions given by assertion {\nf\ref{imotfun})} the following statements are valid.
\begin{enumerate}
\item  $i_*\cong i_!$ is a full embeddings{\nf;} $j^*=j^!$ is isomorphic to the localization {\nf(}functor{\nf\/)} of $\dmx$ by $i_*(\dm(Z))$.

\item For any $M\in \obj \dmx$ the pairs of morphisms $j_!j^!(M) \to M \to i_*i^*(M)$ and $i_!i^!(M) \to M \to j_*j^*(M)$ can be uniquely completed to distinguished triangles {\nf(}here the connecting morphisms come from the adjunctions of assertion {\nf\ref{imotfun})}.

\item $i^*j_!=0${\nf;} $i^!j_*=0$.

\item All of the adjunction transformations $i^*i_*\to 1_{\dm(Z)}\to i^!i_!$ and $j^*j_*\to 1_{\dm(U)}\to j^!j_!$ are isomorphisms of functors.
\end{enumerate}

\item\label{igluc} 
For the subcategories $\dmc(-)\subset \dm(-)$ the obvious analogue of the previous assertion is fulfilled.

\item \label{itr}
If $f$ is a finite universal homeomorphism, then $f^*$, $f_*$, $f^!$,  and $f_!$  are equivalences of categories. 
Moreover, $ f^!R_Y\cong f^*R_Y =R_X$ and $f_*R_X= f_!R_X\cong R_Y$.

\item\label{igenc}
If $S$ is of finite type over a field, $\dmcs$ {\nf(}as a triangulated category\/{\nf)} is generated by $\{ g_*(R_X)(r)\}$, where $g:X\to S$ runs through all projective morphisms  
 such that $X$ is regular, $r\in \Z$.  

\item\label{icont}
Let a scheme $S$ be  the limit of an essentially affine {\nf(}filtering\/{\nf)} projective system of  schemes $S_\beta$ {\nf(}for $\beta\in B${\nf)}. 
Then $\dmcs$ is isomorphic to the $2$-colimit  of the categories $\dmc(S_\beta);$ in this isomorphism all the connecting functors are given by the corresponding motivic inverse image functors {\nf(}cf.  Remark \ref{ridmot}(1) below{\nf\/)}. 

\item\label{ibormo}  If $S$ is 
smooth over $k$ {\nf(}or over any other perfect field{\nf\/)},  
then for $b, c, r\in \Z$ with $r\ge 0$ we have $R_S(b)(2b)\perp R_S(c)[2c+r]$.
\end{enumerate}
\end{theorem}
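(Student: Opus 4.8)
The plan is to reduce the asserted orthogonality to a vanishing statement for the motivic cohomology of $S$, and then to invoke the concentration of Bloch's cycle complex in non-negative degrees. First I would use that the Tate twist $-(1)$ is an auto-equivalence of $\dms$ (assertion \ref{itate}) together with the shift-invariance of the bifunctor $\dms(-,-)$ to rewrite, with $n=c-b$,
$$\dms(R_S(b)[2b],\,R_S(c)[2c+r])\ \cong\ \dms(R_S,\,R_S(n)[2n+r]).$$
Thus it suffices to prove that $\dms(R_S,R_S(n)[2n+r])=\ns$ for every $n\in\Z$ whenever $r>0$; this is exactly the orthogonality to be established. (I interpret the typographic $R_S(b)(2b)$ in the statement as $R_S(b)[2b]$.)

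Next I would identify this $R$-module with a motivic cohomology group of $S$. Since $S$ is smooth over a perfect field it is in particular regular, so the comparison of $cdh$-motives with Nisnevich motives over regular bases (\cite{cdintmot}*{Corollary 5.9}) applies, and the Hom-group becomes a relative motivic cohomology group $H^{2n+r}_{\mathcal{M}}(S,R(n))$. For smooth $S$ over a perfect field this motivic cohomology is computed by Bloch's higher Chow groups, $H^{m}_{\mathcal{M}}(S,R(n))\cong \mathrm{CH}^n(S,2n-m)\otimes_{\Z}R$; the inversion of $p$ built into $R$ is precisely what secures this comparison in characteristic $p$.

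Finally I would conclude by a degree count. With $m=2n+r$ one has $2n-m=-r$, so the group in question is $\mathrm{CH}^n(S,-r)\otimes_\Z R$. Bloch's cycle complex vanishes in negative simplicial degrees, hence $\mathrm{CH}^n(S,j)=0$ for $j<0$ (and $\mathrm{CH}^n(S,\cdot)=0$ for $n<0$ for trivial reasons); therefore $\mathrm{CH}^n(S,-r)=0$ once $r>0$, which yields the claimed orthogonality. Note that at the boundary value $r=0$ one instead recovers the Chow groups $\mathrm{CH}^n(S)\otimes_\Z R$, which are generally nonzero (already for $S=\spe k$, $n=0$, where the group is $R$); this is why these Tate-twisted objects will populate the \emph{heart} of $w_{\chow}$ rather than being orthogonal to themselves, and it indicates that the intended range is the strict one $r>0$.

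The main obstacle is the identification in the middle step: one must guarantee that the \emph{relative} $cdh$-motivic Hom-groups with $R$-coefficients genuinely compute higher Chow groups for smooth $S$ over a perfect field in characteristic $p$. This rests on the Cisinski--D\'eglise comparison machinery of \cite{cdintmot} and on the control of $p$-torsion phenomena, for which inverting $p$ is essential. As a useful cross-check, I would observe that assertion \ref{icont} reduces the computation of these Hom-groups to smooth schemes of finite type over the prime field $k$, where Voevodsky's theory gives the higher-Chow-group description and the above vanishing directly.
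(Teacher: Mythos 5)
Your proposal only addresses assertion (\ref{ibormo}), and that is the genuine gap: the statement you were asked to prove is a fourteen-part compendium of the Cisinski--D\'eglise formalism for $cdh$-motives --- existence of $\dm(-)$ and of its compact subcategory, the adjunctions $f^*\dashv f_*$ and $f_!\dashv f^!$ with their $2$-functoriality, base change (\ref{iexch}), Tate twists, relative and absolute purity (\ref{ipur}, \ref{ipura}), the gluing data (\ref{iglu}, \ref{igluc}), the behaviour under finite universal homeomorphisms (\ref{itr}), the generation statement (\ref{igenc}), and continuity (\ref{icont}). The paper does not reprove any of this: its proof is a compilation of precise citations to \cite{cdintmot} (\S1.5--1.6, Theorem 6.4, Propositions 4.3, 6.2, 7.1, 7.2, Theorems 5.1 and 5.11, Corollary 8.6) and to \cite{degcis}*{2.4.45, 2.4.50}, plus two short deductions (compact gluing from (\ref{imotfun}) and (\ref{iglu}), and (\ref{itr}) by passing to adjoints of the equivalences $f^*\cong f^!$). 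Your text is silent on all of this, and moreover it \emph{uses} parts (\ref{itate}) and (\ref{icont}) of the very theorem as inputs, which would be circular if the theorem were genuinely being proved from scratch; since these results cannot realistically be established within the paper, a complete answer must at least identify them as imported results and say where they come from.

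For the one part you do treat, your argument is correct and matches the substance behind the paper's one-line citation of \cite{cdintmot}*{Corollary 8.6, putting $X=\spe k$}: reduce by Tate twisting to $\dms(R_S,R_S(n)[2n+r])$, identify this group with the motivic cohomology (higher Chow groups) of the smooth scheme $S$ --- this identification with $p$ inverted is exactly what Corollary 8.6 supplies, cf.\ Remark \ref{rmotcoh} of the paper --- and conclude from the vanishing of Bloch's cycle complex in negative degrees. Your detour through the Nisnevich comparison (\cite{cdintmot}*{Corollary 5.9}) is harmless but unnecessary, since the comparison with higher Chow groups is available for the $cdh$-categories directly. Your remark about the boundary case is a correct and useful catch: for $r=0$ and $b=c$ the identity of $R_S(b)[2b]$ is nonzero, so orthogonality can only hold for $r>0$; the ``$r\ge 0$'' (and the ``$(2b)$'' in place of ``$[2b]$'') in the statement are misprints, and indeed Lemma \ref{ort1}, where (\ref{ibormo}) is applied, asserts and needs precisely the case $r>0$.
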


\begin{proof}
These statements can (mostly) be found \cite{cdintmot}. Concretely:

(\ref{imotcat}) \cite{cdintmot}*{\S1.6}.

(\ref{imotgen}) Immediate from the definition of $\dm(-)$ given in \cite{cdintmot}*{\S1.5}. 

(\ref{imotfun})\cite{cdintmot}*{\S1.6, Theorem 6.4}. 

(\ref{iexch},\ref{itate},\ref{iupstar},\ref{ipur},\ref{iglu})  \cite{cdintmot}*{Proposition 4.3, Theorem 5.1} yield that $\dm(-)$ is a {\it motivic triangulated category}\/; 
see \cite{degcis}*{2.4.45} for the definition and 
\cite{degcis}*{2.4.50} for a list of properties that includes the assertions desired.



(\ref{ipura}) \cite{cdintmot}*{Proposition 6.2}.

(\ref{igluc}) This is an easy consequence of assertions \ref{imotfun} and \ref{iglu}; cf. (the proof of) Proposition 1.1.2(11) in \cite{brelmot}.

(\ref{itr}) By \cite{cdintmot}*{Proposition 7.1} the functors $f^*\cong f^!$ are  equivalences of categories. Hence their (right and left) adjoints $f_*$ and $f_!$ are equivalences also. 

(\ref{igenc}) \cite{cdintmot}*{Proposition 7.2}.

(\ref{icont}) \cite{cdintmot}*{Theorem 5.11}.

(\ref{ibormo}) \cite{cdintmot}*{Corollary 8.6, putting $X=\spe k$}. 
\end{proof}

\begin{rema}
\label{ridmot}
1. In \cite{cdintmot} the functor $g^*$ was constructed for any morphism $g:Y'\to Y$ not necessarily of finite type; it  preserves compact objects (see 
\S6.1(ii) of ibid.) and unit objects
$R_{-}$ (i.e., $g^*(R_{Y})=R_{Y'}$).  Besides, for any such $g$ and any  morphism $f:X\to Y$ of finite type we have an isomorphism $g^*f_!\cong f'_!g'{}^*$ (for the corresponding $f'$ and $g'$; cf. part \ref{iexch} of  our theorem).

We also note: if $f$ is a pro-open limit of immersions, then one can define $f^!=f^*$ (in particular, one can define $j_K^!$ for the natural morphism $j_K:K\to S$ if $K$ is a  Zariski point  of a scheme $S$; cf. \cite{bbd}*{\S2.2.12}); $f^!$ 
 preserves compact objects. 
For the functors of this type 
we also have $g'_*f'{}^!\cong f^!g_*$ (for  $g$ of finite type; see part \ref{iexch} of our theorem once more); see  \cite{degcis}*{Proposition 4.3.14}. 

2. For any
  morphism $f:X\to Y$ of finite type we set $\mgbm_Y(X)=f_!R_X$ (this is a certain {\it Borel-Moore} motif of $X$; cf. \cite{brhtp}, \cite{degchz}, and \cite{lemm}*{\S I.IV.2.4}). Note that Theorem \ref{pcisdeg}(\ref{itr}) easily yields that  $\mgbm_Y(X)\cong  \mgbm_Y(X_{red})$ (recall that $X_{red}$ is the reduced scheme associated with $X$). Besides, for any (separated) morphism $g:Y'\to Y$ the previous part of this remark yields that $g^*(\mgbm_Y(X))\cong \mgbm_{Y'}(X\times_Y Y')$.
\end{rema}

\begin{lemma}\label{filtr}
Let $S=\cup S_\ell^\alpha$ be a stratification. 
Then the following statements are valid.

\begin{enumerate}
\item  
For any $M,N\in \obj \dms$ there exists a filtration of $\dms(M,N)$  whose factors are certain subquotients of $\dm(S_\ell^\alpha)(j_\ell^*(M), j_\ell^!(N))$.
 If $M=R_S(a)[2a]$, $N=R_S(b)[2b+r]$ for $a,b,r\in \z$, $S$ is regular, and all $S_\ell^\alpha$ are regular and connected, then 
the factors of this filtration on $\dms(M,N)$ are  certain subquotients of $\dm(S_\ell^\alpha)(R_{S_\ell^\alpha}, R_{S_\ell^\alpha}(b-a-c_\ell)[2b+r-2a-2c_\ell])$, where $c_\ell$ is the codimension of $S_\ell$ in $S$.

\item 
If $g:S\to Y$ is a morphism of finite type then $\mgbmy(S)$ belongs to the 
 envelope of  $\mgbmy(S_{\ell,red}^{\alpha})$. 

\item Let $Z\subset X$ be a closed subscheme, where $X$ is a scheme of finite type over S{\nf\/;} denote by $U\subset X$ the complementary open subscheme. 
If $Z$ and $X$ are regular, $Z$ is everywhere of codimension $c$ in $X$, then there is a distinguished triangle
\begin{equation}\label{emgeffc}
z_* R_Z(-c)[-2c]\to x_* R_X\to u_*R_U
\end{equation} 
in $\dms$, where $z,x,u$ are the corresponding structure morphisms {\nf(}to $S)$.

\end{enumerate}\end{lemma}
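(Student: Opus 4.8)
The plan is to obtain the triangle directly by applying the pushforward $x_*$ to one of the two gluing triangles on $X$. Let $i\colon Z\to X$ denote the closed immersion and $j\colon U\to X$ the complementary open immersion, so that the structure morphisms factor as $z=x\circ i$ and $u=x\circ j$ (all three morphisms are of finite type since $X$ is of finite type over $S$, so the relevant direct images are defined). First I would invoke the second of the two gluing triangles of Theorem~\ref{pcisdeg}(\ref{iglu})(b) for the object $M=R_X\in\obj\dmx$, namely
\[
i_!i^!(R_X)\to R_X\to j_*j^*(R_X),
\]
which is distinguished by construction.

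The next step is to identify the three terms. Since $i$ is a closed immersion it is proper, so $i_!\cong i_*$ by Theorem~\ref{pcisdeg}(\ref{ipur}). As $Z$ and $X$ are regular and $i$ is an immersion everywhere of codimension $c$, the relative purity isomorphism of Theorem~\ref{pcisdeg}(\ref{ipura}) gives $i^!(R_X)\cong R_Z(-c)[-2c]$. Finally $j^*$ is symmetric monoidal with $j^*(R_X)=R_U$ by Theorem~\ref{pcisdeg}(\ref{iupstar}). Using that Tate twists commute with the motivic image functors (Theorem~\ref{pcisdeg}(\ref{itate})), the triangle above becomes a distinguished triangle in $\dmx$ of the form
\[
i_*R_Z(-c)[-2c]\to R_X\to j_*R_U.
\]

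Finally I would apply the exact functor $x_*$ and use the $2$-functoriality of the direct image (Theorem~\ref{pcisdeg}(\ref{imotfun})): the composition relations $x_*i_*\cong z_*$ and $x_*j_*\cong u_*$, together with the commutation of $x_*$ with the Tate twist and shift, turn the images of the three terms into $z_*R_Z(-c)[-2c]$, $x_*R_X$, and $u_*R_U$ respectively. This yields precisely the asserted distinguished triangle~(\ref{emgeffc}) in $\dms$.

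I do not anticipate a serious obstacle here: the argument is essentially a formal combination of the gluing datum, relative purity for regular immersions, and functoriality of the pushforward. The only points requiring care are choosing the correct gluing triangle---the one whose third term involves $j_*j^*$ rather than $i_*i^*$---so that the cone reads $u_*R_U$ while the first term carries the codimension-$c$ Tate twist produced by $i^!$, and keeping careful track of the commutation of Tate twists and shifts past the functors $i_*$, $j_*$, and $x_*$.
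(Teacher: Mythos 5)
Your argument for part 3 is correct and is essentially identical to the paper's own proof of that part: the paper likewise takes the gluing triangle $i_!i^!R_X\to R_X\to j_*j^*R_X$ from Theorem \ref{pcisdeg}(\ref{iglu}), identifies $i_!i^!R_X\cong i_*R_Z(-c)[-2c]$ via relative purity (part \ref{ipura}) and $j^*R_X\cong R_U$ via part \ref{iupstar}, and then applies $x_*$. So for that item there is nothing to add.

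The genuine gap is that the statement has three parts and you have only proved the third. Part 1 (the filtration of $\dms(M,N)$ with factors that are subquotients of $\dm(S_\ell^\alpha)(j_\ell^*(M),j_\ell^!(N))$) and part 2 (that $\mgbmy(S)$ lies in the envelope of the $\mgbmy(S_{\ell,red}^\alpha)$) are not addressed at all, and neither is a formal consequence of part 3. Both are proved in the paper by induction on the number of strata, starting from the other gluing triangle $j_!j^!(M)\to M\to i_*i^*(M)$ for the open stratum $S_1^\alpha$ and its closed complement $Z$: for part 1 one takes the long exact sequence of $\dms(-,N)$ applied to this triangle and uses the adjunctions $\dms(i_*i^*(M),N)\cong\dm(Z)(i^*M,i^!N)$ and $\dms(j_!j^!(M),N)\cong\dm(S_1^\alpha)(j^*M,j^!N)$, then invokes the inductive hypothesis on $Z$; the refinement for $M=R_S(a)[2a]$, $N=R_S(b)[2b+r]$ then follows from parts \ref{iupstar} and \ref{ipura} of Theorem \ref{pcisdeg}. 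For part 2 one applies $g_!$ to the same triangle with $M=R_S$ to get $\mgbmy(S_1^\alpha)\to\mgbmy(S)\to\mgbmy(Z)$ and again inducts, using $\mgbmy(T)\cong\mgbmy(T_{red})$ from Remark \ref{ridmot}(2) to pass to reductions. Without these two inductive arguments the lemma as stated is not established.
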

\begin{proof}
1. The second part of the assertion is an simple consequence of the first one (see Theorem \ref{pcisdeg}(\ref{iupstar},\ref{ipura})).

We prove the first 
 statement by induction on the number of strata.
By definition (see  \S\ref{snotata})  $S_1^\alpha$ is open in $S$, and the remaining $S_\ell^\alpha$  yield a stratification of $S\setminus S_1^\alpha$.
We denote $S\setminus S_1^\alpha$ by $Z$, the (open) immersion $S_1^\alpha \to S$ by $j$ and the (closed) immersion $Z\to S$ by $i$.

Now, Theorem \ref{pcisdeg}(\ref{iglu}) yields a distinguished triangle 
\begin{equation}\label{eglu} j_!j^!(M) \to M\to i_*i^*(M).\end{equation} Hence there exists
 a (long) exact sequence
$$\dots\to \dms (i_*i^*(M),N)\to \dms(M,N)\to \dms (j_!j^!(M),N) \to\dots$$ 
The corresponding adjunctions of functors yield
$\dms(i_*i^*(M),N)\cong \dm(Z)(i^*(M), i^!(N))$ and $\dms(j_!j^!(M),N')\cong  \dm({S_1^\alpha})(j^*(M), j^!(N))$.

Now, by the inductive assumption the group $\dm(Z)(i^*(M), i^!(N))$ has a filtration whose factors are certain subquotients of $\dm(S_\ell^\alpha)(j_\ell^*(M), j^!(N))$ (for $\ell\neq 1$). This concludes the proof.

2.  We use the same induction and notation as in the previous proof. Considering the distinguished triangle (\ref{eglu}) for $M=R_S$ and applying $g_!$ to it we obtain a distinguished triangle \begin{equation}\label{emgys}
\mgbmy(S_1^\alpha) 
{\to}\mgbmy(S) 
{\to} \mgbmy(Z). 
\end{equation} 
In order to complete the inductive step it suffices to 
apply (the first statement in) 
Remark \ref{ridmot}(2).

3. We can assume that $X$ is connected. Theorem \ref{pcisdeg}(\ref{iglu}) yields a distinguished triangle $i_!i^!R_X \to 
R_X\to  j_*j^*R_X(\cong  j_*R_U) $ (see also part \ref{iupstar} of the theorem).
If $Z$ and $X$ are regular, then $i_!i^!R_X\cong i_*R_Z( -c)[-2c] $ (see part \ref{ipura} of 
the theorem). Hence the  application of $x_*$ to this distinguished triangle yields (\ref{emgeffc}).
\end{proof}

\subsection{Some orthogonality lemmas}\label{sortl}

The following motivic statements are very important for the current paper.

\begin{lemma}\label{ort1}
If $S$ is a regular scheme, then for any $a, b, r\in \Z$ with $r>0$ we have $R_S(a)[2a]\perp R_S(b)[2b+r]$.
\end{lemma}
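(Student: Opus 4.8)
The plan is to reduce the orthogonality over the general regular scheme $S$ to the case of schemes that are \emph{smooth} over the prime field $k$, where Theorem~\ref{pcisdeg}(\ref{ibormo}) applies directly (note that it even allows $r\ge 0$, which certainly covers our $r>0$). Two mechanisms from the excerpt are available for this reduction: the ``gluing'' filtration of Lemma~\ref{filtr}(1), which relates $\dms$-morphisms to morphisms over the strata of a stratification, and the continuity isomorphism of Theorem~\ref{pcisdeg}(\ref{icont}), which computes morphisms over an essentially affine limit scheme as a colimit of morphisms over the terms of the system.

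First I would fix a convenient stratification. Since $S$ is excellent and regular, I claim it admits a \emph{finite} stratification $S=\cup S_\ell^\alpha$ all of whose strata $S_\ell^\alpha$ are regular, connected and \emph{affine}. Such a stratification is built by Noetherian induction: choose a connected affine (necessarily regular) open $S_1^\alpha\subset S$, pass to the reduced complement $Z=(S\setminus S_1^\alpha)_{red}$, and repeat, using that the regular locus of the excellent reduced scheme $Z$ is open and dense (so that it contains a nonempty connected affine open); finite Krull dimension and Noetherianity guarantee termination. With this stratification in hand, Lemma~\ref{filtr}(1) shows that $\dms(R_S(a)[2a],R_S(b)[2b+r])$ carries a finite filtration whose factors are subquotients of the groups $\dm(S_\ell^\alpha)(R_{S_\ell^\alpha},R_{S_\ell^\alpha}(b-a-c_\ell)[2(b-a-c_\ell)+r])$, where $c_\ell$ is the codimension of $S_\ell^\alpha$ in $S$. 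Thus it suffices to prove that each of these groups vanishes; observe that the shift $r$ is unchanged, so the problem over $S$ has been replaced by the \emph{same} problem over each affine regular stratum.

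The final step handles an affine regular stratum $S_\ell^\alpha$. Here I would use that $k$ is a prime field, hence perfect, so that regularity and geometric regularity over $k$ coincide; consequently the structure morphism $S_\ell^\alpha\to\spe k$ is a regular morphism. By Popescu's theorem (general N\'eron desingularization) the corresponding ring is a filtered colimit of smooth $k$-algebras, i.e.\ $S_\ell^\alpha$ is an essentially affine limit of smooth affine $k$-schemes $T_\beta$. Applying continuity (Theorem~\ref{pcisdeg}(\ref{icont})) identifies the relevant morphism group with $\operatorname{colim}_\beta \dm(T_\beta)(R_{T_\beta},R_{T_\beta}(b-a-c_\ell)[2(b-a-c_\ell)+r])$, and every term of this colimit vanishes by Theorem~\ref{pcisdeg}(\ref{ibormo}) because $T_\beta$ is smooth over the perfect field $k$ and $r>0$. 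Hence all filtration factors vanish and the lemma follows.

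The step I expect to be the main obstacle is the passage from an arbitrary affine regular scheme to a pro-smooth one over $k$: this rests on Popescu's desingularization theorem together with the (elementary but essential) observation that over a perfect base ``regular'' upgrades to ``geometrically regular''. The bookkeeping in the stratification step --- ensuring the strata are simultaneously regular, connected and affine, and that the codimensions $c_\ell$ are well defined --- is routine, as is the identification of morphism groups in the $2$-colimit, but some care is needed to arrange Lemma~\ref{filtr}(1) to apply to exactly this class of strata.
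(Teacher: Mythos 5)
Your argument is correct and is essentially the paper's own proof: both reduce via Lemma~\ref{filtr}(1) to a regular affine connected stratum, present it as a filtered limit of regular (hence smooth) finite type $k$-schemes by Popescu's theorem, and conclude by continuity (Theorem~\ref{pcisdeg}(\ref{icont})) together with Theorem~\ref{pcisdeg}(\ref{ibormo})). The only difference is that you spell out the construction of the stratification and the perfectness of $k$ in more detail than the paper does.
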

\begin{proof}
We stratify $S$ as $\cup_{1\le \ell \le n} S_\ell$ 
 so that all $S_\ell$ are regular, affine  and connected.
For such a stratification (by Lemma \ref{filtr}(1)) we should prove  
$$\dm(S_\ell)(R_{S_\ell}(a-\operatorname{codim}_S S_\ell)[2a-2\operatorname{codim}_S S_\ell], R_{S_\ell}(b-\operatorname{codim}_S S_\ell)[2b-2\operatorname{codim}_S S_\ell+r])=\ns.$$
Thus it is sufficient to prove the statement for strata. 
That is, we can assume that $S$ is regular and affine.
Such a scheme $S$ can be presented as the inverse limit of regular schemes of finite type over $k$
(by the Popescu--Spivakovsky theorem; see \cite{degcis}*{Theorem 4.1.5}).
If $S=\varprojlim S_\beta$, then  $\dm_c(S)(R_S(b)[2b], R_S(c)[2c+r])=\varinjlim\dm_c(S_\beta)(R_{S_\beta}(b)[2b], R_{S_\beta}(c)[2c+r])$ by   Theorem \ref{pcisdeg}(\ref{icont}). In conclusion, we refer to part (\ref{ibormo}) of  this theorem. 
\end{proof}

\begin{rema}
\label{rmotcoh}
This continuity argument along with \cite{cdintmot}*{Corollary 8.6} also easily yields that  $\dm(R_S, R_S(b)[2b+r])$ is isomorphic to the corresponding higher Chow group of $S$, i.e., it can be computed using the Bloch or Suslin complex (of codimension $b$ cycles in $S\times \Delta^{-*}$) with $R$-coefficients, if $S$ is a regular affine scheme. This result cannot be automatically generalized to arbitrary (regular excellent finite-dimensional equicharacteristic)  schemes since (to the knowledge of the authors) the Mayer--Vietoris property is not known for the higher Chow groups in this generality.
\end{rema}

\begin{lemma}\label{l4onepo}
Let $X$ and $Y$ be regular schemes, and $x:X\to S$ and $y:Y\to S$ be quasi-projective morphisms, $r,b,c\in \Z$.

Then $x_!(R_X)(b)[2b]\perp y_*(R_Y)(c)[2c+r]$ if $r>0$.

\end{lemma}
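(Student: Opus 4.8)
The plan is to reduce the orthogonality $x_!(R_X)(b)[2b]\perp y_*(R_Y)(c)[2c+r]$ to the already-established Lemma \ref{ort1} by stripping off the twists and shifts and then decomposing the motives $x_!(R_X)$ and $y_*(R_Y)$ into pieces supported on regular strata. Since the Tate twist $-(1)$ is an auto-equivalence (Theorem \ref{pcisdeg}(\ref{itate})), I may first normalize: the group $\dms(x_!(R_X)(b)[2b], y_*(R_Y)(c)[2c+r])$ is isomorphic to $\dms(x_!(R_X), y_*(R_Y)(c-b)[2(c-b)+r])$, so without loss of generality I can take $b=0$ and absorb $c-b$ into a single twist $d=c-b$, reducing to showing $x_!(R_X)\perp y_*(R_Y)(d)[2d+r]$ for $r>0$.

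The key step is to use the adjunction $(y_*, y^!)$ together with the base-change and projection machinery to move everything over $Y$, or alternatively to compute $\dms(x_!R_X, y_*(R_Y)(d)[2d+r])$ via $\dms(x_!R_X, N)$ where $N=y_*(R_Y)(d)[2d+r]$. Here I would invoke Lemma \ref{filtr}(2): choosing a stratification $\alpha$ of $X$ (not of $S$) into regular connected strata $S_\ell^\alpha$, the Borel--Moore motive $\mgbms(X)=x_!R_X$ lies in the envelope of the $\mgbms(S_{\ell,red}^\alpha)$. Because $\perp N$ defines a subclass of $\obj\dms$ that is closed under extensions and retracts (and contains $0$), it suffices to prove $\mgbms(S_\ell^\alpha)\perp N$ for each regular stratum; here each $S_\ell^\alpha$ is regular and equipped with a quasi-projective structure morphism to $S$ as a locally closed subscheme composed with $x$. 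Thus I reduce to the case where $X$ itself is a regular, connected, affine scheme.

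Dually, I would like to dispose of $y_*(R_Y)$. The cleaner route is to note that by Theorem \ref{pcisdeg}(\ref{ipur}) (properness) I cannot assume $y$ is proper, but I can use the adjunction to rewrite $\dms(x_!R_X, y_*R_Y(d)[2d+r])\cong \dm(Y)(y^*x_!R_X, R_Y(d)[2d+r])$, and then apply the exchange isomorphism of Theorem \ref{pcisdeg}(\ref{iexch}) to the Cartesian square defining $X\times_S Y$, giving $y^*x_!R_X\cong x'_!R_{X\times_S Y}$ where $x'$ is the projection $X\times_S Y\to Y$. Stratifying $X\times_S Y$ into regular strata again via Lemma \ref{filtr}(2), I reduce to $\dm(Y)(w_!R_W, R_Y(d)[2d+r])$ with $W$ regular affine and $w:W\to Y$ quasi-projective; a further application of the adjunction $(w_!, w^!)$ and the filtration of Lemma \ref{filtr}(1) expresses this group as a successive extension of subquotients of groups of the form $\dm(W_\ell)(R_{W_\ell}(a')[2a'], R_{W_\ell}(b')[2b'+r])$ over regular strata $W_\ell$, each of which vanishes by Lemma \ref{ort1} since $r>0$.

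The main obstacle I anticipate is the bookkeeping of twists and shifts through the two adjunctions and the exchange isomorphism: I must verify that after moving to the fiber product $X\times_S Y$ and applying Lemma \ref{filtr}(1), the relevant Hom-groups on each stratum really are of the shape $R_{W_\ell}(a')[2a']\perp R_{W_\ell}(b')[2b'+r]$ with the \emph{same} positive $r$ and matching even shift-to-twist ratios, so that Lemma \ref{ort1} applies verbatim; the regularity hypothesis on $X$ and $Y$ is essential precisely to guarantee (via Theorem \ref{pcisdeg}(\ref{ipura})) that $w^!$ introduces only a pure Tate twist on each regular stratum, keeping $r$ unchanged. A secondary subtlety is that the fiber product $X\times_S Y$ need not be regular even when $X$ and $Y$ are, which is exactly why I route through Lemma \ref{filtr}(2) to replace it by its regular strata before invoking Lemma \ref{ort1}.
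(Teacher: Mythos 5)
Your argument is correct, but it takes a genuinely different route through the middle of the proof. The paper first applies the adjunction $x_!\dashv x^!$, factors the quasi-projective $x$ as an immersion followed by a smooth morphism to reduce to the case where $x$ is an immersion (absorbing the smooth purity twist), then stratifies $Y$ so that each stratum lies either over $X$ or over $S\setminus X$; the strata over the complement contribute nothing, the remaining ones let one assume $y$ factors through $x$ and hence (via $x^!x_*\cong\id$) that $X=S$, after which the adjunction $y^*\dashv y_*$ lands directly in Lemma \ref{ort1}. You instead apply $y^*\dashv y_*$ and base change (Theorem \ref{pcisdeg}(\ref{iexch})) right away, replacing $x_!R_X$ by $x'_!R_{X\times_S Y}$ over $Y$, stratify the (possibly singular) fibre product into regular reduced strata via Lemma \ref{filtr}(2) and the extension-stability of ${}^\perp N$, and finish with $w_!\dashv w^!$ plus absolute purity (Theorem \ref{pcisdeg}(\ref{ipur},\ref{ipura})) to reach Lemma \ref{ort1}. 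Your bookkeeping is sound: every purity isomorphism contributes a twist of the shape $(m)[2m]$, so the positive defect $r$ survives, and quasi-projectivity of $x$ is exactly what makes each stratum $W$ quasi-projective over $Y$ so that $w^!R_Y$ is a pure Tate twist on each connected component. What your route buys is symmetry and the avoidance of both the smooth factorization of $x$ and the case analysis on strata of $Y$; what the paper's route buys is that it never has to stratify the fibre product and stays closer to the "absolute" case $X=S$. Two minor redundancies in your write-up: the reduction of $X$ to regular affine pieces in your second paragraph is never used once you pass to $X\times_S Y$, and the final appeal to Lemma \ref{filtr}(1) is unnecessary because Lemma \ref{ort1} already performs that stratification internally for a regular base.
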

\begin{proof}
By   Theorem \ref{pcisdeg}(\ref{itate}) we can assume that $b=0$. 
Next, we have 
$$\dmcs(x_!(R_X),y_*(R_Y)(c)[2c+r])\cong \dmc(X)(R_X,x^!y_*(R_Y)(c)[2c+r])$$ since $x^!$ is left adjoint to $x_!$.

Thus we should prove that 
$$
\dmcx(R_X, x^!y_*(R_Y)(c)[2c+r])=\ns.
$$
We argue somewhat similarly to \cite{brhtp}*{\S2.1}. Let us make certain reduction steps.

Consider a factorization of $x$ as $X\stackrel{f}{\to}S'\stackrel{h}{\to}S$, where $h$ is smooth of dimension $q$, $f$ is an embedding, $S'$ is connected, and consider the corresponding diagram
$$ \begin{CD} 
  Z@>{f_Y}>> Y'@>{h_Y}>> Y\\
  @VV{z_X}V @VV{y'}V@VV{y}V \\
  X@>{f}>> S' @>{h}>> S
\end{CD}$$
 (the upper row is the base change of the lower one to $Y$).
 Then we have $x^!y_*R_Y(\ell)[2\ell]= f^!h^!y_*R_Y(\ell)[2\ell]\cong f^! y'_*h_Y^! R_Y(\ell)[2\ell]$, $\ell=\dim Y$
 (by Theorem \ref{pcisdeg}(\ref{iexch})). Parts \ref{ipur} and \ref{iupstar} of Theorem \ref{pcisdeg} allow us to transform this into 
 $f^! y'_*R_{Y'}$. Hence below we may assume  $x$ to be an embedding (since we can replace $S$ by $S'$ in the assertion). Besides, the isomorphism $x^!y_*\cong z_{X*}z_Y^*$ for $z_Y=h_Y\circ f_Y$ yields that the group in question is zero if $Y$ lies over 
 $S\setminus X$ (considered as a set); see    Theorem \ref{pcisdeg}(\ref{imotcat}).
 
 Next we apply  Lemma \ref{filtr}(1); it yields that   it suffices to verify the statement for $Y$ replaced by the components of some  regular connected stratification. 

Now, we can choose a stratification of this sort such that each  $Y_\ell$  lies either over $X$ or over $S\setminus X$. 
Therefore it suffices to verify our assertion in the case where $y$ factors through $x$. Moreover, since $x^!x_*$ is the identity functor on $\dmc(X)$ (in this case; see  Theorem \ref{pcisdeg}(\ref{iglu})), we may also assume that $X=S$. Next, 
applying the adjunction $y^*\dashv y_*$ we transform $\dms(R_S, y_*(R_Y)(c)[2c+r])$ into $\dmy(y^*(R_S), R_Y(c)[2c+r])= \dmy(R_Y, R_Y(c)[2c+r])$.
Thus it remains to apply the previous Lemma.
\end{proof}

\subsection{Weight structures: short reminder}\label{sws}

We recall some basics of the theory of weight structures.

\begin{definition}\begin{enumerate}
\item A pair of subclasses $\cu_{w\le 0},\cu_{w\ge 0}\subset\obj \cu$ will be said to define a \emph{weight structure} $w$ for $\cu$ if they  satisfy the following conditions:
\begin{enumerate}
\item $\cu_{w\ge 0},\cu_{w\le 0}$ are Karoubi-closed in $\cu$ (i.e., contain all $\cu$-retracts of their objects).

\item {\bf Semi-invariance with respect to translations:} \\
$\cu_{w\le 0}\subset \cu_{w\le 0}[1]$, $\cu_{w\ge 0}[1]\subset\cu_{w\ge 0}$.

\item {\bf Orthogonality:}\\
$\cu_{w\le 0}\perp \cu_{w\ge 0}[1]$.
 
\item {\bf Weight decompositions:}
For any $M\in\obj \cu$ there exists a distinguished triangle 
\begin{equation*}
B\to M\to A\stackrel{f}{\to} B[1]
\end{equation*} 
such that $A\in \cu_{w\ge 0}[1],\  B\in \cu_{w\le 0}$.
\end{enumerate}
\item The category $\hw\subset \cu$ whose objects are $\cu_{w=0}=\cu_{w\ge 0}\cap \cu_{w\le 0}$, $\hw(Z,T)=\cu(Z,T)$ for $Z,T\in \cu_{w=0}$,
will be called the {\it heart} of $w$.

\item $\cu_{w\ge i}$ (resp. $\cu_{w\le i}$, resp. $\cu_{w= i}$) will denote $\cu_{w\ge0}[i]$ (resp. $\cu_{w\le 0}[i]$, resp. $\cu_{w= 0}[i]$).
We denote $\cu_{w\ge i}\cap \cu_{w\le j}$ by $\cu_{[i,j]}$ (so it equals $\ns$ for $i>j$).

\item We will  say that $(\cu,w)$ is {\it  bounded}  if $\cup_{i\in \Z} \cu_{w\le i}=\obj \cu=\cup_{i\in \Z} \cu_{w\ge i}$.

\item Let $\cu$ and $\cu'$ be triangulated categories endowed with weight structures $w$ and $w'$, respectively; let $F:\cu\to \cu'$ be an exact functor.  $F$ will be called {\it left weight-exact} (with respect to $w$, $w'$) if it maps $\cu_{w\le 0}$ into $\cu'_{w'\le 0}$; it will be called {\it right weight-exact} if it maps $\cu_{w\ge 0}$ into $\cu'_{w'\ge 0}$. $F$ is called {\it weight-exact} if it is both left and right weight-exact.

\item Let $H$ be a full subcategory of a triangulated $\cu$.
We will say that $H$ is {\it negative} if $\obj H\perp (\cup_{i>0}\obj (H[i]))$.

\item\label{ifact} We call a category $\frac A B$ a {\it factor} of an additive category $A$ by its (full) additive subcategory $B$ if $\obj \bigl( \frac A B\bigl)=\obj A$ and $\frac A B(M,N)= A(M,N)/(\sum_{O\in \obj B} A(O,N) \circ A(M,O))$.

\end{enumerate}\label{dwstr}\end{definition}

Now we recall those properties of weight structures that will be needed below (and that can be easily formulated). See \cite{brelmot} for the references to the  proofs  (whereas the fact that   $\hw'\cong \frac{\hw}{\hw_{\du}}$in the setting of assertion \ref{iloc} was proved in \cite{bosos}).

\begin{proposition} \label{pbw}
Let $\cu$ be a triangulated category. 
\begin{enumerate}
\item\label{idual}
$(C_1,C_2)$ $(C_1,C_2\subset \obj \cu)$ define a weight structure for $\cu$ if and only if $(C_2^{op}, C_1^{op})$ define a weight structure for $\cu^{op}$.

\item\label{iort} 
Let $w$  be a weight structure for $\cu$. 
Then $\cu_{w\ge 0}=(\cu_{w\le -1})^\perp$ and $\cu_{w\le 0}={}^\perp \cu_{w\ge 1}$ {\nf(}~see \S\ref{snotata}\/{\nf)}.

\item\label{iext} Let $w$  be a weight structure on
 $\cu$. Then  $\cu_{w\le 0}$, $\cu_{w\ge 0}$, and $\cu_{w=0}$
are extension-stable.

\item\label{iuni} 
Suppose that $v$, $w$ are weight structures for $\cu$\/{\nf;} let $\cu_{v\le 0}\subset \cu_{w\le 0}$ and $\cu_{v\ge 0}\subset \cu_{w\ge 0}$.
Then $v=w$ {\nf(}i.e., the inclusions are equalities\/{\nf)}.

\item \label{igen}
Assume  $H\subset \obj \cu$ is negative and   $\cu$ is idempotent complete. 
Then there exists a unique weight structure $w$ on the 
triangulated subcategory $T$ of $\cu$ generated by $H$ such that $H\subset T_{w=0}$. 
Its heart is the {\it envelope} {\nf(}see \S\ref{snotata}{\nf)} of $H$ in $\cu;$ it is the idempotent completion of $H$ if $H$ is additive.
 
\item \label{iwgen} 
For the weight structure mentioned in the previous assertion, $T_{w\le 0}$ is the envelope of  $\cup_{i\le 0}H[i]${\nf;}
$T_{w\ge 0}$ is the envelope of  $\cup_{i\ge 0}H[i]$.

\item \label{iadj} 
Let $\cu$ and $\du$ be triangulated categories endowed with weight structures $w$ and $v$, respectively. 
Let $F: \cu \leftrightarrows \du:G$ be adjoint functors. 
Then $F$ is left weight-exact if and only if $G$ is right weight-exact.


\item\label{iloc} 
Let $w$  be a weight structure for $\cu${\nf;} let $\du\subset \cu$ be a triangulated subcategory of $\cu$. 
Suppose that $w$ yields a weight structure for  $\du$ {\nf(}i.e., $\obj \du\cap \cu_{w\le 0}$ and $\obj \du\cap \cu_{w\ge 0}$ give a weight structure for $\du)$. 

\noindent Then $w$ also induces a weight structure on $\cu/\du$ {\nf(}the localization, i.e., the Verdier quotient of $\cu$ by $\du)$ in the following sense\/{\nf:} the Karoubi-closures of $\cu_{w\le 0}$ and $\cu_{w\ge 0}$ {\nf(}considered as classes of objects of $\cu/\du)$ give a weight structure $w'$ for $\cu/\du$ {\nf(}note that $\obj \cu=\obj \cu/\du)$. 
Besides, 
$\hw'$ is 
naturally equivalent to  $\frac{\hw}{\hw_{\du}}$ if $w$ is bounded.

\item \label{iwegen}
Suppose that $\du\subset \cu$ is a full subcategory of compact objects endowed with a bounded weight structure $w'$. 
Suppose that $\du$ weakly generates $\cu;$ let $\cu$  admit arbitrary {\nf(}small\/{\nf)} coproducts. 
Then $w'$ can be extended to a certain weight structure $w$ for $\cu$. 

\item\label{igluws}
Let $\du\stackrel{i_*}{\to}\cu\stackrel{j^*}{\to}\eu$ be a part of a gluing datum
{\nf(}see Theorem \ref{pcisdeg}\/{\nf(\ref{iglu}))}.
Then for any pair of weight structures on $\du$ and $\eu$ {\nf(}we will denote them by $w_\du$ and $w_\eu$, respectively\/{\nf)}
there exists a weight structure $w$ on $\cu$ such that both $i_*$ and $j^*$ are weight-exact {\nf(}with respect to the corresponding  weight structures\/{\nf)}. 
Besides, the functors $i^!$ and $j_*$ are right weight-exact {\nf(}with respect to the corresponding weight structures\/{\nf);} $i^*$ and $j_!$ are left weight-exact. 
Moreover, 
$$\cu_{w\ge 0}=C_1=\{M\in \obj\cu:\ i^!(M)\in \du_{w_\du\ge 0} ,\ j^*(M)\in \eu_{w_\eu\ge 0} \}$$ and 
$$\cu_{w\le 0}=C_2=\{M\in \obj \cu:\ i^*(M)\in \du_{w_\du\le 0} ,\ j^*(M)\in \eu_{w_\eu\le 0} \}.$$
Lastly, $C_1$ {\nf(}resp. $C_2)$ is the  envelope  of $j_!(\eu_{w\le 0})\cup  i_*(\du_{w\le 0})$  {\nf(}resp. of $ j_*(\eu_{w\ge 0})\cup i_*(\du_{w\ge 0}))$. 

\item\label{igluwsc} 
In the setting of the previous assertion, if $w_{\du}$ and $w_{\eu}$ are  bounded, then $w$ is bounded also. 
 Besides, $\cu_{w\le 0}$ is the envelope of $\{i_*(\du_{w_{\du}=l}),\ j_!(\eu_{w_{\eu}=l}),\ l\le 0\};$
$\cu_{w\ge 0}$ is the envelope of $\{i_*(\du_{w_{\du}=l}),\ j_*(\eu_{w_{\eu}=l}),\ l\ge 0\}$.

\item\label{igluwsn} 
In the setting of  assertion \ref{igluws}, the weight structure $w$ described is the only weight structure for $\cu$ such that both $i_*$ and $j^*$ are weight-exact.
\end{enumerate}
\end{proposition}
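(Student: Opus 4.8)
The plan is to deduce this uniqueness from the explicit description of the glued weight structure $w$ recorded in assertion \ref{igluws} together with the comparison criterion of assertion \ref{iuni}. So I would take an arbitrary weight structure $v$ on $\cu$ for which both $i_*$ and $j^*$ are weight-exact (with respect to $w_\du$ and $w_\eu$), and show that $\cu_{v\le 0}\subset \cu_{w\le 0}$ and $\cu_{v\ge 0}\subset \cu_{w\ge 0}$; assertion \ref{iuni} then forces $v=w$.

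First I would record the weight-exactness of the relevant adjoints of the gluing functors that is automatically forced by the assumption on $v$. Recall the adjunctions $i^*\dashv i_* \dashv i^!$. Since $i_*$ is (in particular) right weight-exact with respect to $v$ and $i^*$ is its left adjoint, assertion \ref{iadj} gives that $i^*$ is left weight-exact; since $i_*$ is also left weight-exact and $i^!$ is its right adjoint, the same assertion gives that $i^!$ is right weight-exact. The functor $j^*$ is weight-exact by hypothesis, so it is both left and right weight-exact (the analogous facts for $j_!$ and $j_*$ are not needed here).

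Next I would verify the two inclusions by feeding objects through the explicit classes $C_1, C_2$ of assertion \ref{igluws}. If $M\in \cu_{v\le 0}$, then $i^*(M)\in \du_{w_\du\le 0}$ (as $i^*$ is left weight-exact) and $j^*(M)\in \eu_{w_\eu\le 0}$ (as $j^*$ is left weight-exact); by the description $\cu_{w\le 0}=C_2$ this says exactly that $M\in \cu_{w\le 0}$. Dually, if $M\in \cu_{v\ge 0}$, then $i^!(M)\in \du_{w_\du\ge 0}$ (as $i^!$ is right weight-exact) and $j^*(M)\in \eu_{w_\eu\ge 0}$ (as $j^*$ is right weight-exact), so $M\in \cu_{w\ge 0}=C_1$. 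This gives the two inclusions, and assertion \ref{iuni} concludes the argument.

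Given the machinery already in place, I do not expect a genuine obstacle: the statement is essentially a formal corollary of the characterizing formulas for $\cu_{w\le 0}$ and $\cu_{w\ge 0}$ proved in assertion \ref{igluws}. The only point demanding care is the bookkeeping of the adjunctions $i^*\dashv i_*\dashv i^!$ and the correct application of assertion \ref{iadj}, so as to pair each one-sided weight-exactness of $v$ with the correctly oriented functor — $i^*$ for the $\le 0$ inclusion, $i^!$ for the $\ge 0$ inclusion. Once those formulas are invoked, no further computation is required.
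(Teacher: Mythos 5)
Your argument for assertion \ref{igluwsn} is correct and is precisely the standard deduction found in the sources the paper cites for this proposition (\cite{bws}*{\S 8.2}, \cite{brelmot}): the explicit description $\cu_{w\ge 0}=C_1$, $\cu_{w\le 0}=C_2$ from assertion \ref{igluws}, combined with the adjunction bookkeeping of assertion \ref{iadj} applied to $i^*\dashv i_*\dashv i^!$ and the comparison criterion of assertion \ref{iuni}, does force any competing weight structure $v$ to coincide with $w$, with no circularity. Note only that the paper itself gives no proof of Proposition \ref{pbw} (all assertions are deferred to the references), and your proposal treats just the final assertion while taking the preceding ones as given --- which is legitimate here, since the uniqueness statement is indeed a formal corollary of them.
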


\begin{rema}\label{rlift}
Part \ref{iloc} of the proposition can be re-formulated as follows. If $i_*:\du\to \cu$ is an embedding of triangulated categories that is weight-exact (with respect to certain weight structures for $\du$ and $\cu$), an exact functor $j^*:\cu\to \eu$ is equivalent to the localization of $\cu$ by $i_*(\du)$, then there exists a unique weight structure $w'$ for $\eu$ such that the functor $j^*$ is weight-exact. If $w$ is bounded then $\hw_{\eu}$ is 
equivalent to $\frac {\hw} {i_*(\hw_{\du})}$ (with respect to the natural functor $\frac {\hw}{i_*(\hw_{\du})}\to \eu$).
\end{rema}

\section{On the Chow weight structures for relative motives}\label{schow}

This is the main section of our paper. We define the Chow weight structures for  relative motives using the "gluing construction" and study 
their properties. We also prove that the heart of $w_{\chow}(S)$ consists of certain "Chow" motives if $S$ is a variety over a field (or if it is "pro-smooth affine" over a variety).

A substantial part of this section is just a "recombination" of (the corresponding parts of) \cite{brelmot}*{\S2}; yet some of the arguments used in \S\ref{sfwchow}
are quite new (and rather interesting). 

\subsection{The 
construction of the Chow weight structure}\label{schowunr}

First we  describe certain candidates for $\dmcs_{w_{\chow}\ge 0}$ and $\dmcs_{w_{\chow}\le 0}$; 
next we will prove that they yield a weight structure for $\dmcs$ indeed. A reader interested in certain "motivation" for this construction is strongly recommended to look at (the remarks in) \cite{brelmot}*{\S2.3}.

For a scheme $X$ we will denote by $\opx$ (resp. $\onx$) the envelope (see \S\ref{snotata}
) of $p_*(R_P)(s)[i+2s]$($\cong \mgbm_X(P)(s)[i+2s]$; see Remark \ref{ridmot}(2)) 
in $\dmcx$; 
here $p:P\to X$ runs through all  morphisms to $X$ that can be factorized as $g\circ h$, where $h:P\to X'$ is a smooth projective morphism, $X'$ is a regular scheme, $g:X'\to X$ is a finite universal homeomorphism,  $s\in \Z$, whereas $i\ge 0 $ (resp. $i\le 0$). 
We denote $\opx\cap \onx$ by $\ozx$.

\begin{rema}\label{r1fun}
1. Recall that for a morphism $f:Y\to X$ we have 
$f^*\mgbmx(P)\cong \mgbmy({P_Y})$ (see  Remark \ref{ridmot}(2)). 
Next, suppose that for $X'/X$ as above the  scheme $Y'_{red}$ associated with $Y'=X'_Y $ 
 is regular. Then the aforementioned  remark  immediately yields that $f^* 
\mgbmx(P)(s)[2s]\in \oz(Y)$.

Moreover, 
Theorem \ref{pcisdeg} 
easily yields that $f^!\mgbm_X(P)\in \oz(Y)$ if $f$ induces an immersion $Y'_{red}\to X'$ of regular schemes. 
Indeed, consider the diagram \begin{equation}\label{cdred}\begin{CD}
P_{Y,red}@>{p_r}>> P_Y@>{f_P}>> P\\
@VV{h_{Y,red}}V @VV{h_Y}V@VV{h}V \\
Y'_{red}@>{y'_r}>> Y' @>{f'}>> X'\\
@. @VV{g_Y}V@VV{g}V \\
@. Y @>{f}>> X
\end{CD}\end{equation}
where  $p_r$ and $y'_r$ are the corresponding nil-immersions. We can assume that $Y$ and $P$ are connected; hence $Y'_{red}$ and $P_{Y,red}$ are connected also. Denote the codimension of $Y'_{red}$ in $ X'$ by $c$; 
then $p_r\circ f_P:P_{Y,red}\to P$ is an immersion of regular schemes of codimension $c$ also. 
Then 
 we obtain: $f^!\mgbm_X(P)=f^! p_*(R_P)
\cong (g_Y\circ y'_r)_*h_{Y,red_*}(p_r\circ f_P)^!R_{P}$. 
Using  part \ref{ipura} of the theorem, we transform this into $(g_Y\circ y'_r)_*h_{Y,red_*}R_{P_{Y_{red}}}(-c)[-2c] \in \oz(Y)$.

2. Certainly, for $\operatorname{char} X=0$ the universal homeomorphisms mentioned are just isomorphisms.
\end{rema}

For a stratification $\alpha:S=\cup S_\ell^\alpha$ we denote by $\opal$ the class $\{M\in \obj \dmcs:\ j_\ell^! (M)\in \op(S_\ell^\alpha), 1\le \ell\le n\} $;
$\onal=\{M\in \obj \dmcs:\ j_\ell^* (M)\in \on(S_\ell^\alpha), 1\le \ell\le n\} $. 

We define the Chow weight structure for $\dmcs$ as follows: $\dmcs_{w_{\chow}\ge 0}=\cup_\alpha \opal$, $\dmcs_{w_{\chow}\le 0}=\cup_\alpha \onal$; here $\alpha$ runs through all 
stratifications of $S$.

\begin{lemma}\label{lglustr}
1. Let $\delta$ be a {\nf(}not necessarily regular\/{\nf)} stratification of $S;$ we denote the corresponding immersions $S_\ell^\delta\to S$ by $j_\ell$.
Let $M$ be an object of $\dmcs$.
Suppose that $j_\ell^!(M)\in \dmc(S_\ell^\delta)_{w_{\chow}\ge 0}$  {\nf(}resp. $j_\ell^*(M)\in \dmc(S_\ell^\delta)_{w_{\chow}\le 0})$ for all $\ell$.

Then $M\in \dmcs_{w_{\chow}\ge 0}$ {\nf(}resp. $M\in \dmcs_{w_{\chow}\le 0}${\nf)}.

2.   For any immersion $j:V\to S$ we have $j_*(\dmc(V)_{w_{\chow}\ge 0})\subset \dmcs_{w_{\chow}\ge 0}$  and  $j_!(\dmc(V)_{w_{\chow}\le 0})\subset \dmcs_{w_{\chow}\le 0}$.

3. For any $M\in \dmcs_{w_{\chow}\le 0}$ and $N\in \dmcs_{w_{\chow}\ge 1}(=\dmcs_{w_{\chow}\ge 0}[1])$ there exists a stratification $\alpha$ of $S$ such that $M\in \onal$, $N\in \opal[1]$.
\end{lemma}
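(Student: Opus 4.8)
The plan is to establish the three assertions in the stated order, deriving part~2 from part~1 and treating part~3 separately. For part~1, I would first unfold the definition of the weight classes on strata: since $\dmc(S_\ell^\delta)_{w_{\chow}\ge 0}=\cup_\beta\op(\beta)$, the hypothesis $j_\ell^!(M)\in\dmc(S_\ell^\delta)_{w_{\chow}\ge 0}$ provides, for each $\ell$, a stratification $\beta_\ell$ of $S_\ell^\delta$ with $j_\ell^!(M)\in\op(\beta_\ell)$. I would then combine $\delta$ with the $\beta_\ell$ into a single stratification $\alpha$ of $S$ whose strata are the strata of the $\beta_\ell$; ordering these lexicographically (the $\delta$-order as the outer one, each $\beta_\ell$-order inside) one checks the required openness condition, using that each tail $\cup_{i>\ell}S_i^\delta$ is closed in $S$ and that $\beta_\ell$ is a stratification of $S_\ell^\delta$. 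For a stratum $T$ of $\alpha$, say $T\subset S_\ell^\delta$ with immersions $\iota_T:T\to S_\ell^\delta$ and $i_T=j_\ell\circ\iota_T$, the $2$-functoriality of $(-)^!$ for immersions (Theorem \ref{pcisdeg}(\ref{imotfun})) gives $i_T^!(M)=\iota_T^!(j_\ell^!(M))\in\op(T)$, since $j_\ell^!(M)\in\op(\beta_\ell)$. Hence $M\in\opal\subset\dmcs_{w_{\chow}\ge 0}$. The parenthetical ``resp.'' statement is the literal dual, replacing $(-)^!,\op$ by $(-)^*,\on$.

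For part~2 I would deduce everything from part~1. Given an immersion $j:V\to S$, factor it through its scheme-theoretic closure $\bar V$ and use the stratification of $S$ with the three strata $S\setminus\bar V$ (open), $V$ (open in $\bar V$) and $\bar V\setminus V$ (closed in $\bar V$), in this order; the openness condition is immediate. For $N\in\dmc(V)_{w_{\chow}\ge 0}$ I would compute the three restrictions of $j_*(N)$: on $V$ one has $j^!j_*\cong\id$ for an immersion (reduce to the open and closed cases via Theorem \ref{pcisdeg}(\ref{iglu})(d)), so the restriction is $N$; on $S\setminus\bar V$ the functor vanishes because $j_*(N)$ lies in the image of the closed pushforward from $\bar V$ and $(-)^*$ on the open complement kills that image (Theorem \ref{pcisdeg}(\ref{iglu})(a)); on $\bar V\setminus V$ it vanishes by $i^!j_*=0$ (Theorem \ref{pcisdeg}(\ref{iglu})(c)). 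As $N$ and $0$ both lie in the relevant $w_{\chow}\ge 0$ classes, part~1 gives $j_*(N)\in\dmcs_{w_{\chow}\ge 0}$. The statement for $j_!$ and the $w_{\chow}\le 0$ classes is dual, using $i^*j_!=0$ in place of $i^!j_*=0$; here $j_*$ and $j_!$ preserve compactness by Theorem \ref{pcisdeg}(\ref{imotfun}).

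For part~3 the hypotheses furnish stratifications $\alpha_1,\alpha_2$ with $M\in\on(\alpha_1)$ and $N\in\op(\alpha_2)[1]$, i.e. $N[-1]\in\op(\alpha_2)$; the task is to find a single $\alpha$ refining both for which the two memberships persist. The key observation is that an envelope equals the union of the envelopes of its \emph{finite} subsets of generators, so each restriction $j_{S_m}^*(M)\in\on(S_m^{\alpha_1})$ and $j_{S_{m'}}^!(N[-1])\in\op(S_{m'}^{\alpha_2})$ already lies in the envelope of finitely many generators, involving only finitely many of the auxiliary regular schemes $X'$. I would then take a common refinement of $\alpha_1$ and $\alpha_2$ and refine it further into strata $T$ that are regular and over which each of these finitely many regular schemes base-changes to a scheme with regular reduction. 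By Remark \ref{r1fun} the functor $\iota_T^*$ (resp. $\iota_T^!$) then sends the relevant $\on$-generators into $\on(T)$ (resp. the $\op$-generators into $\op(T)$); exactness together with the extension-stability and Karoubi-closedness of $\on(T)$ and $\op(T)$ propagates this to the whole finite envelope, giving $j_T^*(M)\in\on(T)$ and $j_T^!(N[-1])\in\op(T)$ for every stratum $T$. Hence $M\in\onal$ and $N\in\opal[1]$.

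The main obstacle is exactly this last construction: producing a common refinement into regular strata over which the finitely many relevant regular schemes $X'$ have regular base-changed reductions. I would obtain it by Noetherian induction: on each stratum the locus where the stratum itself and all of the (finitely many) relevant base changes are simultaneously regular after reduction is open and dense, by generic regularity for excellent schemes, generic flatness, and the rigidity of finite universal homeomorphisms (Theorem \ref{pcisdeg}(\ref{itr})), which are moreover isomorphisms when $\operatorname{char}S=0$, so that the difficulty is genuinely concentrated in characteristic $p$. Peeling off this dense open locus and recursing on its closed complement terminates by Noetherianity and simultaneously yields a legitimate stratum ordering, since each peeled piece is open in the remaining closed part.
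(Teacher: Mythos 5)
Your proposal is correct and follows essentially the same route as the paper: combining the stratifications witnessing the stratum-wise hypotheses and using $2$-functoriality of $(-)^!$ and $(-)^*$ for part~1 (the paper phrases this as induction on the number of strata, which amounts to your lexicographic combination), deducing part~2 from part~1 via a stratification containing $V$ and the gluing identities $i^!j_*=0$, $i^*j_!=0$, and proving part~3 by reducing (via finiteness of the generators involved and Remark \ref{r1fun}(1)) to finding a common subdivision over which the finitely many relevant finite universal homeomorphisms have regular reduced base changes, established by exactly the paper's Noetherian induction with generic regularity. The only blemishes are cosmetic: the openness condition for your lexicographic ordering holds under the paper's weak notion of stratification without tails being closed in $S$, and Theorem \ref{pcisdeg}(\ref{itr}) is a motivic statement that plays no role in the purely geometric refinement step.
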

\begin{proof}
1. We use induction on the number of strata in $\delta$. The $2$-functoriality of motivic upper image functors yields: it suffices to prove the statement for $\delta$ consisting of two strata.

So, let $S=U\cup Z$, $U$ and $Z$ are disjoint, $U\neq \ns$ is open  in $S$; 
we denote the immersions $U\to S$ and $Z\to S$ by $j$ and $i$, respectively.
By the assumptions on $M$, there  exist stratifications $\beta$ of $Z$ and $\gamma$ of $U$ such that $i^!(M)\in \op(\beta)$ and $j^!(M)\in \op(\gamma)$ (resp. $i^*(M)\in \on(\beta)$ and $j^*(M)\in \on(\gamma)$). 

We take the union of $\beta$ with $\gamma$ and denote by $\alpha$ the stratification of $S$  obtained  (for $\# \gamma=\Gamma$ we put $S_\ell^\alpha=U_\ell^\gamma$ if $1\le \ell\le \Gamma$ and $S_\ell^\alpha=Z_{\ell-\Gamma}^\beta$ if $\ell> \Gamma$; note that we really obtain a stratification in our weak sense of this notion this way; see \S\ref{snotata}).
Then  the $2$-functoriality of $-^!$ (resp. of $-^*$) yields that $M\in \opal$ (resp. $M\in \onal$).

2. We choose a   stratification $\delta$ containing $V$ (as one of the strata). So we assume that $V=S_v^\delta$ for some index $v$. Then it can be easily seen that $j_{u}^!j_{v*}^{\vphantom !}=0=j_{u}^*j_{v!}^{\vphantom *}$  for any $u\neq v$ and  $j_v^!j_{v*}^{\vphantom !}\cong 1_{\dm(V)} \cong j_v^*j_{v!}^{\vphantom *}$ (see    Theorem \ref{pcisdeg}(\ref{iglu})). Hence the result follows from assertion 1.

3. By Remark \ref{r1fun}(1) it suffices to verify the following: if $\beta$, $\gamma$ are stratifications of $S$, and  $S_{i\ell}\to S_\ell^{\beta}$, $S'_{i\ell}\to S_\ell^{\gamma}$ are  (finite) sets of  finite universal homomorphisms, then there exists  a common subdivision $\alpha$ of $\beta$ and $\gamma$ such that all the (reduced) schemes $(S_{i\ell}\times_S {S_{m}^{\alpha}})_{red}, (S'_{i\ell}\times_S {S_{m}^{\alpha}})_{red}$ are regular. To this end it obviously suffices to prove the following: if $f:Z\to S$ is an immersion, $g_i:T_i\to Z$ are some finite universal homeomorphisms, then there exists a stratification $\delta$ of $Z$ such that the schemes $T_{i\ell}=(T_i\times_Z Z_\ell^{\delta})_{red}$ are regular for all $i$ and $\ell$.

We prove the latter statement by 
 Noetherian induction. Suppose that it is fulfilled for any proper closed subscheme $Z'$ of $Z$. Since all $(T_i)_{red}$ are generically regular, we can choose a (sufficiently small) open non-empty subscheme $Z_1$ of $Z$ such that all of  $(T_{i}\times_Z Z_1)_{red}$ are regular. 

Next, apply the inductive assumption to the scheme $Z'=Z\setminus Z_1$ and the morphisms $g_i'=g_i\times _Z Z'$; we choose a  stratification $\alpha'$ of $Z'$ such that all $T'_{i\ell}=(T_i\times_Z {Z'}_\ell^{\alpha'})_{red}$ are regular. Then it remains to take the union of $Z_1$ with $\alpha'$, i.e., we consider the following stratification $\alpha$: $Z_1^{\alpha}=Z_1$, and $Z_\ell^{\alpha}={Z'}_{\ell-1}^{\alpha'}$ for all $\ell>1$.   
 \end{proof}

\begin{theorem}\begin{enumerate}
\item The couple $(\dmcs_{w_{\chow}\ge 0},\, \dmcs_{w_{\chow}\le 0})$ yields a bounded weight structure $w_{\chow}$ for $\dmcs$.  

\item $\dmcs_{w_{\chow}\ge 0}$ {\nf(}resp. $\dmcs_{w_{\chow}\le 0})$ is the envelope of $p_*(R_P)(s)[2s+i]$ {\nf(}resp. of $\mgbm_S(P)(s)[2s-i])$ for $s\in\Z$, $i\ge 0$, and $p:P\to S$ being the composition of a smooth projective morphism with a finite universal homeomorphism whose domain is regular and with an immersion. 
 
\item $w_{\chow}$ can be extended to a weight structure $\wchowb$ for the whole $\dms$.
\end{enumerate}\label{pwchowa}\end{theorem}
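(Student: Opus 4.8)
The plan is to build $w_{\chow}$ by the "gluing from strata" method: reduce to a single regular stratum, handle that base case by Proposition \ref{pbw}(\ref{igen}), glue along a stratification via Proposition \ref{pbw}(\ref{igluws}), and finally prove independence of the stratification so that the union $\cup_\alpha\opal$ collapses to a single weight structure. \emph{Base case.} For a regular $X$ of finite type over $k$, the objects $p_*(R_P)(s)[2s]$, with $p$ a smooth projective morphism precomposed with a finite universal homeomorphism of regular source, generate $\dmcx$ by Theorem \ref{pcisdeg}(\ref{igenc}); since such $p$ is proper we have $p_*R_P\cong p_!R_P=\mgbm_X(P)$ by Theorem \ref{pcisdeg}(\ref{ipur}) and Remark \ref{ridmot}(2), so Lemma \ref{l4onepo} (taking $r>0$) shows this generating class is negative. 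Hence Proposition \ref{pbw}(\ref{igen}),(\ref{iwgen}) produces a bounded weight structure on $\dmcx$ whose $w\ge 0$ (resp. $w\le 0$) class is the envelope of the shifts with $i\ge 0$ (resp. $i\le 0$), that is, exactly $\op(X)$ (resp. $\on(X)$). For a general regular $X$ I would reduce to this by writing $X$ as a limit of regular finite-type schemes and using the continuity isomorphism Theorem \ref{pcisdeg}(\ref{icont}) — the same device that underlies Lemmas \ref{ort1} and \ref{l4onepo}.

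\emph{Gluing over a fixed regular stratification.} Fix a regular stratification $\alpha$. By Theorem \ref{pcisdeg}(\ref{igluc}) the open stratum and its closed complement give a gluing datum for the $\dmc(-)$, so applying Proposition \ref{pbw}(\ref{igluws}) inductively on the number of strata (with the previous paragraph as base) yields a weight structure $w^\alpha$ on $\dmcs$, bounded by Proposition \ref{pbw}(\ref{igluwsc}). The explicit descriptions of $C_1$ and $C_2$ in \ref{igluws}, the $2$-functoriality of $-^!$ and of $-^*$, and the identity $j^!=j^*$ for open immersions (Theorem \ref{pcisdeg}(\ref{ipur})) unwind precisely to $\dmcs_{w^\alpha\ge 0}=\opal$ and $\dmcs_{w^\alpha\le 0}=\onal$: for $C_1$ the open stratum contributes $j_1^!=j_1^*$ and each deeper stratum contributes $j_\ell^!=j'^!_\ell i^!$, and dually for $C_2$ via $-^*$.

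\emph{Independence of the stratification.} This is the step I expect to be the main obstacle. The plan is to show both classes only grow under refinement: if $\alpha'$ refines $\alpha$ then $\opal\subset\op(\alpha')$ and $\onal\subset\on(\alpha')$. For $\on$ this uses that the immersion $k$ of a sub-stratum into $S_\ell^\alpha$ satisfies $k^*(\on(S_\ell^\alpha))\subset\on(S_m^{\alpha'})$ — since $k^*$ preserves the generators and the $[i+2s]$-degree (Remark \ref{r1fun}(1)) — while for $\op$ one uses $k^!$ on immersions of regular schemes (the second part of Remark \ref{r1fun}(1)); in both cases Lemma \ref{lglustr}(1) effects the reduction. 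Since any two regular stratifications have a common regular refinement $\alpha''$, the inclusions $\opal\subset\op(\alpha'')$ and $\onal\subset\on(\alpha'')$ together with the uniqueness statement Proposition \ref{pbw}(\ref{iuni}) force $w^\alpha=w^{\alpha''}=w^{\alpha'}$. Thus all regular stratifications give one and the same weight structure, so $\cup_\alpha\opal=\opal$ and $\cup_\alpha\onal=\onal$ for any single regular $\alpha$; an arbitrary non-regular stratification adds nothing new, as one sees by refining it to a regular one via Lemma \ref{lglustr}(1) and Remark \ref{r1fun}. This proves assertion 1, boundedness being inherited from any $w^\alpha$. The orthogonality and weight-decomposition axioms themselves cost little extra: once Lemma \ref{lglustr}(3) puts a given pair of objects over a common stratification, they follow from the gluing proposition.

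\emph{Assertions 2 and 3.} Assertion 2 is read off from the envelope description in Proposition \ref{pbw}(\ref{igluwsc}) for the chosen regular stratification: the strata hearts consist of Chow-type motives $p'_*(R_P)(s)[2s]$, and pushing these forward by $j_{\ell*}$ (for $w\ge 0$) and by $j_{\ell!}$ (for $w\le 0$) gives, via $(j_\ell\circ p')_*=j_{\ell*}p'_*$ and $(j_\ell\circ p')_!R=\mgbm_S(P)$, exactly the stated generators $p_*(R_P)(s)[2s+i]$ and $\mgbm_S(P)(s)[2s-i]$ with the extra immersion in the factorization of $p$; Lemma \ref{lglustr}(2) handles the Borel--Moore bookkeeping. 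Finally, assertion 3 is immediate from Proposition \ref{pbw}(\ref{iwegen}): $\dmcs$ is a full subcategory of compact objects weakly generating the cocomplete $\dms$ (Theorem \ref{pcisdeg}(\ref{imotcat}),(\ref{imotgen})) and $w_{\chow}$ is bounded, so it extends to $\wchowb$ on $\dms$.
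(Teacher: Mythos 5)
Your overall architecture (a base case on regular strata via Proposition \ref{pbw}(\ref{igen}), then gluing via Proposition \ref{pbw}(\ref{igluws})) is a reasonable plan, but two of your steps have genuine gaps, and they are precisely the points where the paper has to work hardest. First, the base case: Theorem \ref{pcisdeg}(\ref{igenc}) says that $\dmc(X)$, for $X$ of finite type over a field, is generated by $g_*(R_T)(r)$ with $g:T\to X$ \emph{projective and $T$ regular}; it does not say that the generators may be taken of the special shape "smooth projective over a finite universal homeomorphism with regular source", which is what $\op(X)$ and $\on(X)$ are built from. Passing from "projective with regular source" to that special shape is not formal -- it is essentially the issue discussed in Remark \ref{rchow}(2), where the paper explains that the required prime-to-$\ell$ alterations are only available over fields. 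For exactly this reason the paper does not argue stratum-by-stratum at all: it picks a generic point $K$ of $S$, passes to the perfect closure $K^p$ (over a perfect field, regular projective equals smooth projective), descends to a finite purely inseparable extension $K'$, spreads the resulting smooth projective varieties out over a finite universal homeomorphism $U'\to U$ of some open $U\subset S$ using continuity (Theorem \ref{pcisdeg}(\ref{icont})) and Zariski's main theorem, and then runs a Noetherian induction gluing $U$ against its closed complement. Your proposal has no substitute for this step, so the existence of weight decompositions is not established.

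Second, the "independence of stratification" collapse. You want $\opal\subset\op(\alpha')$ for a refinement $\alpha'$, on the grounds that $k^!$ of a generator of $\op(S_\ell^\alpha)$ lies in $\op(S_m^{\alpha'})$. But Remark \ref{r1fun}(1) gives this only when the reduced pullback of the relevant finite universal homeomorphism along the immersion $k$ is again regular, and one can only arrange that by choosing the refinement \emph{after} fixing the finitely many generators occurring in a particular envelope presentation. This is why Lemma \ref{lglustr}(3) is a statement about a single pair of objects $M,N$ rather than about the classes $\opal$, $\onal$ themselves, and why the paper defines $\dmcs_{w_{\chow}\ge 0}$ as a union over \emph{all} stratifications instead of identifying it with $\opal$ for one fixed $\alpha$ (no such identification is claimed anywhere; note that assertion 2 describes the classes as envelopes of generators over $S$ with an \emph{immersion} allowed in the factorization of $p$, which is not the same as $\opal$ for a single $\alpha$). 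What does survive from your argument: the orthogonality axiom (Lemma \ref{lglustr}(3) together with Lemmas \ref{filtr}(1) and \ref{l4onepo}) and assertion 3 via Proposition \ref{pbw}(\ref{iwegen}) are correct; but assertions 1 and 2 as you argue them are not proved.
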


\begin{proof} 
1-2. We prove the statement by Noetherian induction. So, we suppose that assertions 1 and 2 are fulfilled for all proper closed subschemes of $S$. We prove them for $S$.

We denote the envelopes mentioned in assertion 2 by $(\dmcs_{w_{\chow}'\ge 0},\dmcs_{w_{\chow}'\le 0})$. We should prove that $w_{\chow}$ and $w_{\chow}'$ yield coinciding weight structures for $\dmcs$.

Obviously,  $\dmcs_{w_{\chow}\le 0}$, $\dmcs_{w_{\chow}\ge 0}$, $\dmcs_{w_{\chow}'\le 0}$, and $\dmcs_{w_{\chow}'\ge 0}$ are Karoubi-closed in $\dmcs$, and are semi-invariant with respect to translations (in the appropriate sense).

Now, Lemma \ref{lglustr}(2) yields that $\dmcs_{w_{\chow}'\le 0}\subset \dmcs_{w_{\chow}\le 0}$ and $\dmcs_{w_{\chow}'\ge 0}\subset \dmcs_{w_{\chow}\ge 0}$. Hence in order to check that $w_{\chow}$ and $w_{\chow}'$ are indeed weight structures, it suffices to verify the following: 

(i) the orthogonality axiom for $w_{\chow}$;

(ii) any  $M\in \obj\dmcs$ possesses a weight decomposition with respect to $w_{\chow}'$.

Thus these statements along with the boundedness of $w_{\chow}$ imply assertion 1.
Besides, Proposition \ref{pbw}(\ref{iuni}) yields that these two statements imply assertion 2 also, whereas in order to prove  assertion 1 it suffices to verify the boundedness of $w_{\chow}'$ (instead of that for $w_{\chow}$). 

Now we verify (i). For some fixed $M\in \dmcs_{w_{\chow}\le 0}$ and $N\in \dmcs_{w_{\chow}\ge 1}$ we check that $M\perp N$. 
By Lemma \ref{lglustr}(3), we can assume that $M\in \onal$, $N\in \opal[1]$ for some stratification $\alpha$ of $S$.
Hence it suffices to prove  that $\onal\perp \opal[1]$ for any $\alpha$.
The latter statement is an easy consequence of  Lemmas \ref{l4onepo} and \ref{filtr}(1).

Now we verify (ii) along with the boundedness of $w_{\chow}'$.
We choose some generic point $K$ of $S$, denote by $K^p$ its perfect closure, and by $j_{K^p}:K^p\to S$ the corresponding morphism.  
We fix some $M$. 
By Theorem \ref{pcisdeg}(\ref{igenc}), 
there exist some smooth projective varieties $P_{i}/K^p$, $1\le i\le n$,
(we denote the corresponding morphisms $P_{i}\to K^p$ by $p_i$) and some $s\in \Z$ such that $j_{K^p}^*(M)$ belongs to the triangulated subcategory of $\dmc(K^p)$ generated by  $\{p_{i*}(R_{P_{i}})(s)[2s]\}$.
Now we choose some  finite  universal homeomorphism $K'\to K$ (i.e., a morphism of spectra of fields corresponding to a finite  purely inseparable extension) such that $P_{i}$ are defined (and are smooth projective) over $K'$. 
By Theorem \ref{pcisdeg}(\ref{icont},\ref{itr}), for the corresponding morphisms $j_{K'}:K'\to S$ and $p_i':P_{K',i}\to K'$ we have the following: $j_{K'}^*(M)$ belongs to the
triangulated subcategory of $\dmc(K')$ generated by  $\{p'_{i*}(R_{P_{K',i}})(s)[2s]\}$. 
Applying Zariski's main theorem in the form of Grothendieck, we can choose a finite universal homeomorphism $g$ from a regular scheme $U'$ whose generic fibre is $K'$ to an open $U\subset S$ ($j:U\to S$ will denote the corresponding immersion) and  smooth projective $h_i:P_{U',i}\to U'$ such that the fibres of $P_{U',i}$ over $K'$ are isomorphic to $P_{K',i}$. Moreover, by Theorem \ref{pcisdeg}(\ref{icont}) we can also assume that $(j\circ g)^*(M)$ belongs to the
triangulated subcategory of $\dmc(U')$ generated by  $\{h_{i*}(R_{P_{U',i}})(s)[2s]\}$.
Then Theorem \ref{pcisdeg}(\ref{itr}) yields that $j^*(M)$ belongs to the triangulated subcategory $D$ of $\dmc(U)$ generated by  $\{(g\circ h_i)_{*}(R_{P_{U',i}})(s)[2s]\}$.

Since $\id_U$ yields a  stratification of $U$,  the set $\{(g\circ h_i)_{*}(R_{P_{U',i}})(s)[2s]\}$  is negative in $\dmc(U)$ (since $\onal\perp \opal[1]$ for any  $\alpha$, as we have just proved). 
Therefore (by Proposition \ref{pbw}(\ref{igen}--\ref{iwgen})) there exists a weight structure $d$ for $D$ such that $D_{d\ge 0}$ (resp. $D_{d\le 0}$) is the envelope of $\cup_{n\ge 0} \{(g\circ h_i)_{*}(R_{P_{U',i}})(s)[2s+n] \}$ (resp. of $\cup_{n\ge 0} \{(g\circ h_i)_{*}(R_{P_{U',i}})(s)[2s-n]\}$). 
We also obtain that $D_{d\ge 0}\subset \dmc(U)_{w_{\chow}'\ge 0}$ and $D_{d\le 0}\subset \dmc(U)_{w_{\chow}'\le 0}$. 

We denote $S\setminus U$ by $Z$ ($Z$ may be empty); $i:Z\to S$ is the corresponding closed immersion.
 By the inductive assumption, $w_{\chow}$ and $w_{\chow}'$ yield coinciding  bounded weight structures for $\dmc(Z)$. 

We have a gluing datum $\dmc(Z)\stackrel{i_*}{\to}\dmcs \stackrel{j^*}{\to} \dmc(U)$.
We can 'restrict it' to a gluing datum 
$$
 \dmc(Z)\stackrel{i_*}{\to} j^*{\ob} (D)\stackrel{j_0^*}{\to} D 
$$ 
(see  Proposition \ref{pbw}(\ref{igluws})), whereas $M\in \obj (j^*{\ob}(D))$; here $j_0^*$ is the corresponding restriction of $j^*$. 
Hence by Proposition \ref{pbw}(\ref{igluws}) there exists a weight structure $w'$ for $j^*{\ob}(D)$ such that the functors $i_*$ and $j_0^*$ are weight-exact (with respect to the weight structures mentioned). 
Hence there exists a weight decomposition $B\to M\to A$ of $M$ with respect to $w'$. 
Besides, there exist $m,n\in \Z$ such that $j^*_0(M)\in \dmc(U)_{w_{\chow}'\ge m}$, $j^*_0(M)\in \dmc(U)_{w_{\chow}'\le n}$, $i^!(M)\in \dmc(Z)_{w_{\chow}'\ge m}$, and $i^*(M)\in \dmc(Z)_{w_{\chow}'\le n}$.
 Hence $A[-1],M[-m]\in \dmcs_{w_{\chow}'\ge 0}$; $B, M[-n]\in \dmcs_{w_{\chow}'\le 0}$; 
 here we apply Proposition \ref{pbw}(\ref{igluwsc}). 
So, we have verified (ii) and the boundedness of $w_{\chow}'$. 
As was shown above, this finishes the proof of assertions 1-2.

3. Since ${\underline{Hw}}_{\chow}$ generates $\dmcs$, and $\dmcs$ weakly generates $\dms$ (by 
Theorem \ref{pcisdeg}(\ref{imotgen})), 
${\underline{Hw}}_{\chow}$ weakly generates $\dms$. 
Hence the assertion follows immediately from assertion 1 and Proposition \ref{pbw}(\ref{iwegen}). 
\end{proof}

\subsection{
The main properties of $w_{\chow}(-)$}\label{sfwchow} 

Now we study (left and right) weight-exactness of the motivic image functors. 

\begin{theorem}
\begin{enumerate}
\item The functors $-(b)[2b](=-\otimes R_S(b)[2b])$ 
are weight-exact with respect to $w_{\chow}$ for all $S$ and all $b\in \z$.
\item Let $f:X\to Y$ be a 
 {\nf(}separated\/{\nf)} morphism of finite type.
\begin{enumerate}
\item The functors $f^!$ and $f_*$ are right weight-exact{\nf\/;} $f^*$ and $f_!$ are left weight-exact.
\item Suppose moreover that $f$ is smooth. Then $f^*$ and $f^!$ are also weight-exact.
\item If $S_{red}$ is regular then $R_S\in \dmcs_{w_{\chow}=0}$.

\item $\dmcs_{w_{\chow}\le 0}$ is the envelope of $\mgbms(T)(b)[2b-r]$ for $T$ running through all schemes of finite type over $S$, $b\in \z$, $r\ge 0;$ 
$\dmcs_{w_{\chow}\ge 0}$ is the envelope of $t_*(R_T)(b)[2b+r]$ for $t:T\to S$ running through all morphisms of finite type with {\bf regular} domains, 
$b\in \z$, $r\ge 0$.

\item Besides, the functor $g^*$ is right weight-exact for $g$ being an arbitrary {\nf(}separated, not necessarily of finite type\/{\nf)} morphism of schemes. It is weight-exact if 
  $g$ is either {\nf(i)}  a {\nf(}filtering\/{\nf)} projective limit of smooth morphisms such that the corresponding connecting morphisms are smooth affine  or {\nf(ii)} 
	a finite universal homeomorphism. In the latter case 
	$g^!$ is weight-exact also.
\end{enumerate}
\item  Let $i:Z\to X$ be a closed immersion{\nf;} let $j:U\to X$ be the complementary open immersion.
\begin{enumerate}
\item $\hwchow(U)$ is 
equivalent to the factor {\nf(}in the sense of  Definition {\nf\ref{dwstr}(\ref{ifact}))} of $\hwchow(X)$ by $i_*(\hwchow(Z))$.  
\item For $M\in \obj \dmcx$ we have{\nf:} $M\in \dmcx_{w_{\chow}\ge 0}$ {\nf(}resp. $M\in \dmcx_{w_{\chow}\le 0}${\nf)} if and only if $j^!(M)\in \dmc(U)_{w_{\chow}\ge 0}$ and $i^!(M)\in \dmc(Z)_{w_{\chow}\ge 0}$ {\nf(}resp.  $j^*(M)\in \dmc(U)_{w_{\chow}\le 0}$ and $i^*(M)\in \dmc(Z)_{w_{\chow}\le 0})$.
\end{enumerate}
\item Let $S=\cup S_\ell^\alpha$ be a stratification, $j_\ell:S_\ell^\alpha\to S$ be the corresponding immersions. Then for $M\in \obj \dmcs$ we have: $M\in \dmcs_{w_{\chow}\ge 0}$ {\nf(}resp. $M\in \dmcs_{w_{\chow}\le 0})$ if and only if $j_\ell^!(M)\in \dmc(S_\ell^\alpha)_{w_{\chow}\ge 0}$ {\nf(}resp. $j_\ell^*(M)\in \dmc(S_\ell^\alpha)_{w_{\chow}\le 0})$ for all $\ell$.
\item 
 For any $S$ we have $R_S\in \dmcs_{w_{\chow}\le 0}$.
\end{enumerate}\label{tfunctwchow}\end{theorem}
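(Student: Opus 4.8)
The plan is to reduce the claim to the already-established regular case (assertion 2(c) of the theorem) by restricting $R_S$ to the strata of a suitable stratification and then gluing. First I would choose a stratification $\alpha: S=\cup S_\ell^\alpha$ all of whose (reduced) strata are regular, i.e.\ with $(S_\ell^\alpha)_{red}$ regular for every $\ell$. Such a stratification exists for any of our schemes by Noetherian induction: the regular locus of the reduced excellent scheme $S_{red}$ is open and dense, so one peels it off as $S_1^\alpha$, applies the inductive assumption to $(S\setminus S_1^\alpha)_{red}$, and takes the union of the resulting strata with $S_1^\alpha$. This produces a stratification in the weak sense of \S\ref{snotata} (each $S_\ell^\alpha$ being open in $\cup_{i\ge \ell}S_i^\alpha$), exactly as in the inductive construction used in the proof of Lemma \ref{lglustr}(3).

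Next, for each $\ell$ the restriction $j_\ell^*(R_S)$ is canonically isomorphic to $R_{S_\ell^\alpha}$ by Theorem \ref{pcisdeg}(\ref{iupstar}), since $-^*$ preserves unit objects. As $(S_\ell^\alpha)_{red}$ is regular, assertion 2(c) gives $R_{S_\ell^\alpha}\in \dmc(S_\ell^\alpha)_{w_{\chow}=0}\subset \dmc(S_\ell^\alpha)_{w_{\chow}\le 0}$. Thus $j_\ell^*(R_S)\in \dmc(S_\ell^\alpha)_{w_{\chow}\le 0}$ for every $\ell$, and Lemma \ref{lglustr}(1) (equivalently, the stratification criterion of assertion 4) immediately yields $R_S\in \dmcs_{w_{\chow}\le 0}$.

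Alternatively, one can bypass assertion 2(c) and verify directly that $R_S\in\onal$ for this $\alpha$: taking $P=(S_\ell^\alpha)_{red}$ and $p:P\to S_\ell^\alpha$ the associated finite universal homeomorphism (a composite of a zero-dimensional smooth projective morphism with a finite universal homeomorphism whose domain $P$ is regular), Theorem \ref{pcisdeg}(\ref{itr}) gives $p_*(R_P)\cong R_{S_\ell^\alpha}$, so that $R_{S_\ell^\alpha}\in\on(S_\ell^\alpha)$ with $s=0$ and $i=0\le 0$; hence $j_\ell^*(R_S)\in\on(S_\ell^\alpha)$ for all $\ell$, i.e.\ $R_S\in\onal$. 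The only point requiring any care is the construction of the regular stratification $\alpha$, and even this is routine given excellence; everything else is a formal consequence of the gluing description of $w_{\chow}$ together with the behaviour of $-^*$ on unit objects. Consequently I do not expect a genuine obstacle here, the statement being essentially a corollary of the local (regular) computation and the stratification criterion.
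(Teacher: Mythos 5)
Your argument establishes only assertion 5 of the theorem, so let me address that first: for that assertion it is correct and is essentially the paper's own route. The paper disposes of assertion 5 with the single line ``Immediate from assertion 2c,'' and the way to unwind that one-liner is exactly what you wrote: choose (by Noetherian induction and excellence) a stratification whose reduced strata are regular, note that $j_\ell^*(R_S)\cong R_{S_\ell^\alpha}$ lies in $\dmc(S_\ell^\alpha)_{w_{\chow}\le 0}$ by 2(c), and conclude by Lemma \ref{lglustr}(1) (your citing that lemma rather than assertion 4 is the right choice, since the lemma is proved before the theorem and so avoids circularity). Your alternative verification that $R_S\in\onal$ directly from the definition of $\on(-)$ via the nil-immersion $(S_\ell^\alpha)_{red}\to S_\ell^\alpha$ is also sound and mirrors how the paper proves 2(c) itself. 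An even shorter route, which you do not mention, is to quote assertion 2(d) with $T=S$, $b=r=0$, since $\mgbms(S)=R_S$.

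The genuine gap is one of scope: the statement you were given is the entire Theorem \ref{tfunctwchow}, and assertions 1--4 --- which carry all the substance --- are untouched, while your proof of 5 presupposes 2(c). In particular you do not address the bootstrapping order of the paper's proof: weight-exactness is first established for immersions and smooth morphisms via the envelope description of Theorem \ref{pwchowa}(2) and the adjunction criterion of Proposition \ref{pbw}(\ref{iadj}), giving 2(a) for quasi-projective $f$; then 2(c) follows from Theorem \ref{pcisdeg}(\ref{itr}); then 2(d) is proved using Lemma \ref{filtr}(2,3) to reduce to regular quasi-projective $T/S$; and only then is 2(a) extended to arbitrary finite-type (not necessarily quasi-projective) $f$ via the Borel--Moore description --- this is one of the genuinely new arguments of the paper. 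Assertions 3 and 4 then follow from the gluing formalism (Proposition \ref{pbw}(\ref{iloc},\ref{igluws},\ref{igluwsn})). If only assertion 5 was intended to be in scope, your proof is fine and matches the paper; if the whole theorem was, the proposal is missing essentially all of it.
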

\begin{proof}

1. 
Immediate from Theorem \ref{pwchowa}(2).

2. 
Let $f$ be an immersion. Then the description of $w_{\chow}(-)$ given by Theorem \ref{pwchowa}(2) yields that $f_*$ is left weight-exact and $f_!$ is right  weight-exact.
Hence the corresponding adjunctions yield 
(by Proposition \ref{pbw}(\ref{iadj})) that $f^!$  is left weight-exact and $f^*$ is right weight-exact.

If $f$ is smooth, then Theorem \ref{pwchowa}(2) (along with Theorem \ref{pcisdeg}(\ref{iexch}))  easily yields that $f^*$ is left-weight exact and $f^!$ is right weight-exact (since schemes that are smooth over regular bases are regular themselves). 
Hence part \ref{ipur} of  the theorem (along with assertion 1) implies that both of these functors are weight-exact  (so, we obtain assertion 2b).
Besides,  adjunctions yield (by part (\ref{iadj}) of Proposition \ref{pbw})  that $f_!$  is left weight-exact and $f_*$ is right weight-exact.

Thus assertion 2a is valid for any quasi-projective $f$ (since such an $f$ can be presented as  the composition of a closed immersion with a smooth morphism).

Now we verify assertion 2c. Let $S_{red}$ be a regular scheme; denote by $v$ the canonical immersion $S_{red}\to S$.
Then $v_*(R_{{S_{red}}})\in \dmcs_{w_{\chow}=0}$ by 
Theorem \ref{pwchowa}(2). 
Since $v_*(R_{{S_{red}}})\cong R_S$ by Theorem \ref{pcisdeg}(\ref{itr}), we obtain the result.  


Now we are able to prove assertion 2d. First we note (using  Theorem \ref{pwchowa}(2))  that $\dmcs_{w_{\chow}\le 0}$  and $\dmcs_{w_{\chow}\ge 0}$ are subclasses of the corresponding envelopes. 
So, we should verify the converse inclusions.  Note that 
 any excellent Noetherian scheme admits a stratification the reductions of whose components are regular. Hence Lemma \ref{filtr}(2) yields that it suffices to check the following: if  $T$ is a regular  scheme of finite type over $S$, $b\in \z$, and $r\ge 0$, then 
  $\mgbms(T)(b)[2b-r]\in \dmcs_{w_{\chow}\le 0}$ and 
$t_*(R_T)(b)[2b+r] \in \dmcs_{w_{\chow}\ge 0}$. Applying the lemma once more, we reduce the first of these inclusion statements to the case where $T$ is quasi-projective over $S$ (since any scheme of finite type over $S$  possesses a stratification whose components are quasi-projective over $S$).  Similarly, part 3 of the lemma 
allows us to assume that $T$ is (regular and) quasi-projective over $S$ in the second of these inclusion statements. Hence it suffices to note that $R_T\in \dmc(T)_{w_{\chow}=0}$ (by assertion 2c of our Theorem), and apply our assertion 1 along with assertion 2a (for the quasi-projective morphism $t$).

Now we return to the proof of assertion 2a for a general  $f$ (of finite type). Assertion 2d immediately yields the  left weight-exactness of $f_!$. Along with  
Theorem \ref{pcisdeg}(\ref{iexch}) it also easily yields  the  left weight-exactness of $f^*$. Lastly, $f^!$ and $f_*$ are right weight-exact by  Proposition \ref{pbw}(\ref{iadj}).

The first statement in assertion 2e (also) easily follows from assertion 2d (along with Remark \ref{ridmot}(2)). 
Assertion 2d (along with Remark \ref{ridmot}(1)) also implies the weight-exactness of $g^*$ in case (i) (since pro-smooth limits of regular schemes are regular). Lastly, $g^*\cong g^*$ if $g$ is a finite universal homeomorphism (see Theorem \ref{pcisdeg}(\ref{itr})); this finishes the proof of the assertion.

3. Since $i_*\cong i_!$ in this case, the functor $i_*$ is weight-exact by assertion 2a. The functor $j^*$ is weight-exact by assertion 2b.

a). $\dmc(U)$ is the localization of $\dmc(X)$ by $i_*(\dmc(Z))$ by  Theorem \ref{pcisdeg}(\ref{igluc}). 
Hence Proposition \ref{pbw}(\ref{iloc}) yields the result (see Remark \ref{rlift}; cf. also \cite{wildic}*{Theorem 1.7}).

b).  Theorem \ref{pcisdeg}(\ref{igluc}) yields that $w_{\chow}(X)$ is exactly the weight structure obtained by 'gluing $w_{\chow}(Z)$ with $w_{\chow}(U)$' via   Proposition \ref{pbw}(\ref{igluws}) (here we use Theorem \ref{pcisdeg}(\ref{igluwsn})). So we obtain the assertion desired (note that $j^*=j^!$). 

4. The assertion can be easily proved by induction on the number of strata  using assertion 3b.

5. 
Immediate from assertion 2c.
\end{proof}

\begin{rema}\label{rqp}
1. Theorem \ref{pwchowa}(2) and assertion 2d of the previous theorem   give two distinct descriptions of $(\dmcs_{w_{\chow}\ge 0}, \dmcs_{w_{\chow}\le 0})$ as certain envelopes.
It
 certainly follows that instead of all $T$ considered in the assertion mentioned it suffices to take only those $T$ that are quasi-projective over $S$.  

2. One may apply the argument used in the proof of \cite{jin}*{Lemma 2.23} to show that $\mgbms(X)\otimes \mgbms(Y)\cong \mgbms(X\times Y)$  for $X$ and $Y$ being any  schemes of finite type over $S$ (note that loc. cit. itself gives this statement for $R=\Q$). It certainly follows that $\dmcs_{w_{\chow}\le 0}\otimes \dmcs_{w_{\chow}\le 0}\subset \dmcs_{w_{\chow}\le 0}$. 
\end{rema}

Now we prove that positivity and negativity of objects of $\dmcs$ (with respect to $w_{\chow}$) can be 'checked at points'; this is a motivic analogue of  \cite{bbd}*{\S5.1.8}.

\begin{proposition}\label{ppoints}
Let ${\mathcal{S}}$  denote the set of {\nf(}Zariski\/{\nf)} points of $S;$ for  $K\in {\mathcal{S}}$ we will denote the corresponding morphism $K\to S$ by $j_K$.

Then $M\in \dmcs_{w_{\chow}\le 0}$ {\nf(}resp. $M\in \dmcs_{w_{\chow}\ge 0})$ if and only if for any $K\in {\mathcal{S}}$ we have $j_K^*(M)\in \dmc(K)_{w_{\chow}\le 0}$ {\nf(}resp. $j_K^!(M)\in \dmc(K)_{w_{\chow}\ge 0});$ see Remark \ref{ridmot}(1).
\end{proposition}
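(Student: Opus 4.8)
The statement asserts that membership in $\dmcs_{w_{\chow}\le 0}$ (resp.\ $\dmcs_{w_{\chow}\ge 0}$) can be detected by pulling back to all Zariski points via $j_K^*$ (resp.\ $j_K^!$). The plan is to deduce this from the stratification criterion of Theorem \ref{tfunctwchow}(4), combined with the continuity property of Theorem \ref{pcisdeg}(\ref{icont}) and the compatibility of the motivic image functors with the passage to points described in Remark \ref{ridmot}(1). I will treat the "$w_{\chow}\le 0$" case in detail; the "$w_{\chow}\ge 0$" case is entirely dual (using $j_K^!$ in place of $j_K^*$), so I will only indicate the modifications needed at the end.

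\medskip

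\textbf{The easy direction.} First I would verify that the conditions are necessary. If $M\in \dmcs_{w_{\chow}\le 0}$, then for any point $K\in {\mathcal{S}}$ the morphism $j_K:K\to S$ is a pro-open limit of immersions (it factors as a limit of open immersions into the closure of $K$, followed by a closed immersion, followed by a purely inseparable limit to the perfect closure). By Theorem \ref{tfunctwchow}(2e), $g^*$ is left weight-exact for any such morphism (it is in fact weight-exact in cases (i) and (ii), and left weight-exactness for immersions follows from assertion 2a together with the continuity argument of 2e). Hence $j_K^*(M)\in \dmc(K)_{w_{\chow}\le 0}$, as required.

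\medskip

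\textbf{The hard direction via Noetherian induction.} Conversely, suppose $j_K^*(M)\in \dmc(K)_{w_{\chow}\le 0}$ for every point $K$. I would argue by Noetherian induction on $S$, reducing to the stratification criterion of Theorem \ref{tfunctwchow}(4). The strategy is to produce a stratification $S=\cup_\ell S_\ell^\alpha$ such that $j_\ell^*(M)\in \dmc(S_\ell^\alpha)_{w_{\chow}\le 0}$ on each stratum, whence the conclusion is immediate from assertion 4. To build such a stratification, let $K$ be a generic point of $S$; by hypothesis $j_K^*(M)\in \dmc(K)_{w_{\chow}\le 0}$. The key point is a spreading-out argument: since $\dmc(K)$ is the $2$-colimit of $\dmc(U)$ over nonempty open $U\subset \overline{\{K\}}$ (Theorem \ref{pcisdeg}(\ref{icont})), and since $w_{\chow}\le 0$ on each such $U$ is an envelope of explicit generators (Theorem \ref{pwchowa}(2)), the condition $j_K^*(M)\in \dmc(K)_{w_{\chow}\le 0}$ propagates to $j_U^*(M)\in \dmc(U)_{w_{\chow}\le 0}$ for some sufficiently small open $U$ containing $K$. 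Applying the inductive hypothesis to the complementary closed subscheme $Z=S\setminus U$ (with $M$ replaced by $i^*(M)$, noting that the points of $Z$ are among the points of $S$ and that $j_K^*$ factors appropriately through $i^*$), one obtains that $i^*(M)\in \dmc(Z)_{w_{\chow}\le 0}$; refining a stratification of $Z$ witnessing this together with the open stratum $U$ yields the desired stratification of $S$. Theorem \ref{tfunctwchow}(4) then finishes the argument.

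\medskip

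\textbf{The main obstacle and the dual case.} The delicate step is the spreading-out: one must check that the property "$j_K^*(M)$ lies in the envelope defining $w_{\chow}\le 0$" genuinely descends to an open neighborhood, rather than merely holding in the $2$-colimit. This requires that both the membership of an object in a triangulated subcategory generated by finitely many explicit generators and the envelope structure defining $\dmc(U)_{w_{\chow}\le 0}$ behave well under the continuity isomorphism of Theorem \ref{pcisdeg}(\ref{icont}); concretely, a single object $j_K^*(M)$ and its chosen weight decomposition are defined over some finite level $U$, and one must arrange that the truncation triangle there still has its terms in the correct envelopes. The analysis here parallels the one already carried out in the proof of Theorem \ref{pwchowa}(1-2), so I would invoke that technique rather than repeat it. For the "$w_{\chow}\ge 0$" statement, the entire argument dualizes: one uses $j_K^!$ throughout, right weight-exactness of $j_K^!$ (Theorem \ref{tfunctwchow}(2a) and Remark \ref{ridmot}(1), which guarantees $j_K^!$ is defined and preserves compactness for pro-open limits of immersions), and the "$\ge 0$" halves of Theorem \ref{tfunctwchow}(4) and Theorem \ref{pwchowa}(2).
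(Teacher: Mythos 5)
Your proposal is correct in outline, and the easy direction matches the paper's (both invoke Theorem \ref{tfunctwchow}(2a,2e); note only that $j_K$ is a pro-open immersion composed with a closed immersion, so no perfect closure enters). The hard direction, however, takes a genuinely different route. The paper also runs a Noetherian induction, but instead of spreading out the object-level membership $j_K^*(M)\in \dmc(K)_{w_{\chow}\le 0}$ to an open neighbourhood, it uses the orthogonality characterization $\dmcs_{w_{\chow}\le 0}={}^{\perp}\dmcs_{w_{\chow}\ge 1}$ from Proposition \ref{pbw}(\ref{iort}): fix $N\in \dmcs_{w_{\chow}\ge 1}$ and $h\in\dmcs(M,N)$, note that $j_K^*(h)=0$ by the already-proved easy direction, spread out this \emph{vanishing of a morphism} to an open $U$ by continuity (which is immediate, unlike spreading out membership in an envelope), and then use the filtration of Lemma \ref{filtr}(1) for the two-step stratification $\{U,Z\}$ to see that $h\ne 0$ would force $\dmc(Z)(i^*M,i^!N)\ne\ns$, contradicting the inductive hypothesis and orthogonality over $Z$. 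Your route instead hinges on the claim that $j_K^*(M)\in\dmc(K)_{w_{\chow}\le 0}$ propagates to some open $U\ni K$; this is precisely Lemma \ref{lwcont} (``weights are continuous''), which the paper states and proves \emph{after} the proposition --- by the cleaner device of taking a weight decomposition, killing its connecting morphism over some $U$ by continuity, and concluding via a retract and Karoubi-closedness --- rather than by descending an envelope witness as you sketch. There is no circularity in your ordering, and the envelope-descent can be made to work (the finitely many triangles, retractions and generators witnessing envelope membership all descend to a finite level, and generators over $K$ are restrictions of generators over some $U$ by the same spreading-out of smooth projective schemes and finite universal homeomorphisms used in the proof of Theorem \ref{pwchowa}); but this is the crux of your argument and is left unexecuted, whereas the paper's morphism-level argument sidesteps it entirely. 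What your approach buys is that, once the continuity of weights is available, the conclusion drops out of the gluing criterion Theorem \ref{tfunctwchow}(4) with no further computation; what the paper's approach buys is that it needs only the continuity of Hom-groups, which is already part of Theorem \ref{pcisdeg}(\ref{icont}).
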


\begin{proof}
If $M\in \dmcs_{w_{\chow}\le 0}$ (resp. $M\in \dmcs_{w_{\chow}\ge 0}$) then Theorem \ref{tfunctwchow}(2e) (along with (2a))
yields that $j_K^*(M)\in \dmc(K)_{w_{\chow}\le 0}$ (resp. $j_K^!(M)\in \dmc(K)_{w_{\chow}\ge 0}$) indeed.

We prove the converse implication by Noetherian induction. So, suppose that our assumption is true for motives over any closed subscheme of $S$, and that for some $M\in \obj\dmcs$ we have  $j_K^*(M)\in \dmc(K)_{w_{\chow}\le 0}$ (resp. $j_K^!(M)\in \dmc(K)_{w_{\chow}\ge 0}$) for any  $K\in {\mathcal{S}}$.

We should prove that $M\in \dmcs_{w_{\chow}\le 0}$ (resp. $M\in \dmcs_{w_{\chow}\ge 0}$).
By Proposition \ref{pbw}(\ref{iort}) it suffices to verify:  for any  $N\in \dmcs_{w_{\chow}\ge 1} $ (resp. for any $N\in \dmcs_{w_{\chow}\le -1}$), and any $h\in \dmcs(M,N)$  (resp. any  $h\in \dmcs(N, M)$) we have $h=0$. 
We fix some $N$ and $h$.

By the 'only if' part of our assertion (that we have already proved) we have $j_K^*(N)\in  \dmc(K)_{w_{\chow}\ge 1}$  (resp. $j_K^*(N)\in  \dmc(K)_{w_{\chow}\le -1}$); hence $j_K^*(h)=0$. By Theorem \ref{pcisdeg}(\ref{icont}) we obtain that $j^*(h)=0$ for some open embedding $j:U\to S$, where $K$ is a generic point of $U$.

Now suppose that $h\neq 0$; let $i:Z\to S$ denote the closed embedding that is complementary to $j$. 
Then Lemma \ref{filtr}(2) yields that $\dmcs (i^*(M),i^!(N))\neq \ns$  (resp. $\dmcs (i^*(N),i^!(M))\neq \ns$). 
Yet $i^!(N)\in \dmc(Z)_{w_{\chow}\ge 1}$  (resp. $i^*(N)\in \dmc(Z)_{w_{\chow}\le -1}$) by Theorem \ref{tfunctwchow}(b2), whereas $i^* (M)\in \dmc(Z)_{w_{\chow}\le 0}$  (resp. $i^! (M)\in \dmc(Z)_{w_{\chow}\ge 0}$) by the inductive assumption. 
The contradiction obtained proves our assertion.
\end{proof}

Lastly we prove that 'weights are continuous'.

\begin{lemma}\label{lwcont}
Let $K$ be a generic point of  $S;$ denote the morphism $K\to S$ by $j_K$.
Let $M$ be an object of $\dmcs$, and suppose that $j_K^*M\in \dmck_{w_{\chow}\ge 0}$ \nf{(}resp.  $j_K^*M\in\dmck_{w_{\chow}\le 0})$. 
Then there exists an open immersion $j:U\to S$, $K\in U$, such that $j^*M\in \dmc(U)_{w_{\chow}\ge 0}$ \nf{(}resp. $j^*M\in \dmc(U)_{w_{\chow}\le 0})$.
\end{lemma}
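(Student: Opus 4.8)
The plan is to prove the assertion for $j_K^*M\in\dmck_{w_{\chow}\le 0}$; the case $j_K^*M\in\dmck_{w_{\chow}\ge 0}$ is handled symmetrically (replacing the ``positive part'' of the weight decomposition below by the ``negative'' one). Since $w_{\chow}$ is a weight structure on $\dmcs$ (Theorem \ref{pwchowa}(1)), I fix a weight decomposition
$$B\to M\to A\stackrel{f}{\to} B[1],\qquad B\in\dmcs_{w_{\chow}\le 0},\ A\in\dmcs_{w_{\chow}\ge 1}$$
(note $A,B\in\obj\dmcs$, so $A$ is compact). The strategy is to show that the positive part $A$ dies after restriction to the generic point $K$, that this vanishing spreads out to an open neighbourhood of $K$ by continuity, and that restricting $B$ there exhibits $j^*M$ as an object of $\dmc(U)_{w_{\chow}\le 0}$.

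First I would record that $j_K^*$ is weight-exact. As $K$ is a generic point, $j_K$ factors as $K\stackrel{\nu}{\to}\spe\mathcal{O}_{S,K}\stackrel{\lambda}{\to}S$, where $\nu$ is a nil-immersion (a finite universal homeomorphism) and $\lambda$ is the projective limit of the principal open immersions $D(f)\hookrightarrow S$ (for $D(f)$ running through the principal opens of an affine neighbourhood of $K$ that contain $K$), whose connecting morphisms are smooth affine. Hence Theorem \ref{tfunctwchow}(2e) (case (i) applied to $\lambda$, case (ii) applied to $\nu$; cf.\ Remark \ref{ridmot}(1)) shows that $j_K^*=\nu^*\lambda^*$ is weight-exact. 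Applying the exact functor $j_K^*$ to the chosen weight decomposition yields a distinguished triangle $j_K^*B\to j_K^*M\to j_K^*A\to j_K^*B[1]$ in $\dmck$ with $j_K^*B\in\dmck_{w_{\chow}\le 0}$ and $j_K^*A\in\dmck_{w_{\chow}\ge 1}$; thus it is a weight decomposition of $j_K^*M$. Since $j_K^*M\in\dmck_{w_{\chow}\le 0}$ by hypothesis, the triangle $j_K^*M\stackrel{\id}{\to}j_K^*M\to 0$ is a weight decomposition as well, so the essential uniqueness of weight decompositions (see \cite{bws}) forces $j_K^*A\cong 0$.

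Next, $A$ is compact and, by Theorem \ref{pcisdeg}(\ref{icont}) together with the equivalence $\nu^*$ of Theorem \ref{pcisdeg}(\ref{itr}), $\dmck$ is the $2$-colimit of the categories $\dmc(D(f))$ along the restriction functors. Hence the vanishing $j_K^*A=0$ descends to a finite stage: $\id_{j_K^*A}=0$ is the image of $\id_{j_{D(f)}^*A}$ under the colimit map $\dmc(D(f))(j_{D(f)}^*A,j_{D(f)}^*A)\to\dmck(j_K^*A,j_K^*A)$, so it already vanishes at some $U=D(f)\ni K$, i.e.\ $j_U^*A=0$ in $\dmc(U)$. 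Writing $j:U\to S$ for this open immersion and applying the exact functor $j^*$ to the weight decomposition, the vanishing of the cone $j^*A$ gives $j^*B\cong j^*M$. Finally $j$ is smooth, so $j^*$ is weight-exact (Theorem \ref{tfunctwchow}(2b)); therefore $j^*M\cong j^*B\in\dmc(U)_{w_{\chow}\le 0}$, as desired. The step I expect to be the main obstacle is the weight-exactness of $j_K^*$: since $j_K$ is only a pro-open limit up to a nil-immersion, one has to present $K$ as an \emph{essentially affine} limit with \emph{smooth affine} connecting maps in order to invoke Theorem \ref{tfunctwchow}(2e); once this is in place, the pullback of the weight decomposition, the vanishing of $j_K^*A$ by uniqueness, and the spreading-out by compactness are all formal.
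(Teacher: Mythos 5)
There is a genuine gap at the step where you conclude $j_K^*A\cong 0$. The ``essential uniqueness'' of weight decompositions is much weaker than you are using: for two weight decompositions of the same object one only gets (non-unique, generally non-invertible) morphisms between them extending the identity, and in particular an object of $\cu_{w\le 0}$ admits many weight decompositions with nonzero positive part. Concretely, for any $M\in\cu_{w\le 0}$ and any $P\in\cu_{w=0}$ the split triangle $M\oplus P\to M\to P[1]\to M[1]\oplus P[1]$ is a perfectly valid weight decomposition of $M$ whose ``$A$'' is $P[1]\neq 0$. So from $j_K^*M\in\dmck_{w_{\chow}\le 0}$ the most you can extract is that the connecting morphism $j_K^*M\to j_K^*A$ vanishes (by orthogonality, since $j_K^*A\in\dmck_{w_{\chow}\ge 1}$), whence $j_K^*A$ is a retract of $j_K^*B[1]$ and lies in $\dmck_{w_{\chow}=1}$ --- not that it is zero. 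Your subsequent spreading-out of ``$j_K^*A=0$'' therefore starts from a false premise, and the isomorphism $j^*B\cong j^*M$ on $U$ does not follow.

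The surrounding scaffolding is sound and the argument is easily repaired, in essentially the way the paper does it: the datum that vanishes at $K$ should be a \emph{morphism}, not an object. Since $j_K^*$ is weight-exact (your factorization of $j_K$ through $\spe\mathcal{O}_{S,K}$ and Theorem \ref{tfunctwchow}(2e) is exactly the right justification, and is also what underlies Proposition \ref{ppoints}), orthogonality gives $j_K^*(M\to A)=0$; by compactness and Theorem \ref{pcisdeg}(\ref{icont}) this morphism already vanishes after restriction to some open $U\ni K$, so $j^*B\cong j^*M\oplus j^*A[-1]$ and $j^*M$ is a \emph{retract} of $j^*B\in\dmc(U)_{w_{\chow}\le 0}$; Karoubi-closedness of $\dmc(U)_{w_{\chow}\le 0}$ then finishes the proof. (The paper runs this retract argument for the $w_{\chow}\ge 0$ case, applying it to a weight decomposition of $M[1]$ and spreading out the vanishing of $B\to M[1]$, and obtains the $w_{\chow}\le 0$ case by duality.)
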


\begin{proof}
First we treat the case $j_K^*(M)\in \dmck_{w_{\chow}\ge 0}$.
We consider a weight decomposition of $M[1]$: $B\stackrel{g}{\to} M[1]{\to} A\to B[1]$.
We obtain that $j^*_K(g)=0$ (since $\dmck_{w_{\chow}\le 0}\perp j^*_K(M)[1])$; hence (by Theorem \ref{pcisdeg}(\ref{icont}))  there exists an open immersion $j:U\to S$ ($K\in U$)
such that $j^*(g)=0$. Thus $j^*M[1]$ is a retract of $j^*A$. Since $j^*A[-1]\in \dmc(U)_{w_{\chow}\ge 0}$ (see Theorem \ref{tfunctwchow}(2b)), and $\dmc(U)_{w_{\chow}\ge 0}$ is Karoubi-closed in $\dmc(U)$, we obtain the result.
 
The second part of our statement (i.e., the one for the case  $j_K^*(M)\in\dmck_{w_{\chow}\le 0}$) can be easily verified using the dual argument (see Proposition \ref{pbw}(\ref{idual})). 
\end{proof}

\subsection{Describing $\hwchow$ via Chow motives}\label{stmain}

Now we prove that  for certain $S$ the heart of $w_{\chow}(S)$ has a 
quite  "explicit" description in terms of certain Chow motives over $S$ (whence the name).

\begin{rema}
For a scheme $S$ define the category $\chows$ of Chow motives over $S$ as the Karoubi-closure of $\{\mgbm_S(X)(r)[2r]\}=\{f_*(R_X)(r)[2r]\}$ in $\dmcs$; here $f:X\to S$ runs through all  
 proper morphisms such that $X$ is regular, $r\in \Z$.

Then Theorem \ref{tfunctwchow}(2c,2a) yields that  $\chows\subset \hwchow(S)$.

\end{rema}

Now we prove that in some cases the latter embedding is an equivalence of categories. 

\begin{proposition}\label{pchow}
 Assume that $S$ can be presented as a filtered {\nf(}projective\/{\nf)} limit of varieties over some {\nf(}not necessarily prime\/{\nf)} field  with smooth and affine transition morphisms.
Then $\chows= \hwchow(S)$.
\end{proposition}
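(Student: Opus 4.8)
The plan is to prove $\chows = \hwchows$ by establishing the reverse inclusion to the one already noted in the preceding Remark; i.e.\ since $\chows\subset \hwchows$ is automatic, it suffices to show every object of $\hwchows$ lies in $\chows$. My strategy is to reduce, via the continuity statement \ref{pcisdeg}(\ref{icont}), to the case where $S$ is itself a variety of finite type over a field, and then to exploit the generating set of $\dmcs_{w_{\chow}=0}$ coming from Theorem \ref{pwchowa}(2) together with the weight-exactness results of Theorem \ref{tfunctwchow}.

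First I would handle the continuity reduction. Write $S=\varprojlim S_\beta$ with the $S_\beta$ varieties over a field and the transition morphisms smooth affine; by \ref{pcisdeg}(\ref{icont}) the category $\dmcs$ is the $2$-colimit of the $\dmc(S_\beta)$ with connecting functors the inverse-image functors $g^*$. By Theorem \ref{tfunctwchow}(2e) each such $g^*$ is weight-exact (this is exactly case (i) there, as $g$ is a pro-smooth affine limit), so it preserves $\dmc(-)_{w_{\chow}=0}$ and, since $g^*\mgbm_{S_\beta}(X)\cong \mgbm_S(X\times_{S_\beta}S)$ by Remark \ref{ridmot}(2), it carries Chow motives to Chow motives. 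Hence an object $M\in\hwchows$ is the image of some $M_\beta\in\dmc(S_\beta)_{w_{\chow}=0}$ under $g^*$, and if I know $M_\beta\in\chow(S_\beta)$ then $M=g^*M_\beta\in\chows$. This reduces the claim to the case $S$ a variety over a field.

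Now suppose $S$ is a variety over a field $F$. Take $M\in\dmcs_{w_{\chow}=0}=\dmcs_{w_{\chow}\ge 0}\cap\dmcs_{w_{\chow}\le 0}$. By Theorem \ref{pwchowa}(2), $\dmcs_{w_{\chow}=0}$ is the envelope (intersected appropriately) of the objects $p_*(R_P)(s)[2s]$, where $p$ factors as a smooth projective morphism followed by a finite universal homeomorphism from a regular scheme followed by an immersion. The key point is that such an object is already a retract of a genuine Chow motive $f_*(R_X)(r)[2r]$ with $f$ proper and $X$ regular: a smooth projective morphism is proper, and the \emph{immersion} part is what one must eliminate. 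I would argue that because $M$ lies in $\dmcs_{w_{\chow}=0}$ (i.e.\ is simultaneously of nonnegative and nonpositive weight) and $\hwchows$ is the heart — hence closed only under retracts and direct sums, not under cones — any weight-zero element is a retract of a direct sum of the generating objects \emph{concentrated in weight zero}, and I can choose those generators with $p$ proper (no immersion needed) by spreading out over $S$. Concretely, I would use Proposition \ref{pbw}(\ref{igen}) identifying the heart with the idempotent completion (Karoubi-closure) of the additive category generated by the weight-zero generators, and then show each generator $p_*(R_P)(s)[2s]$ with the immersion present is isomorphic in the heart to one without it, using properness of $S\to\spe F$ to compactify.

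The hard part will be precisely this last elimination of the immersion factor: the generating objects for $w_{\chow}$ in Theorem \ref{pwchowa}(2) allow $p$ to involve a locally closed immersion, whereas $\chows$ only permits \emph{proper} $f$. I expect to resolve this by compactifying: given $P\to X'\to S$ with the last leg an immersion, I would use that $S$ is of finite type over a field to choose a projective compactification, apply the gluing/localization description of $\hwchows$ from Theorem \ref{tfunctwchow}(3a) (which presents $\hwchow(U)$ as a factor category of $\hwchow(X)$ by $i_*(\hwchow(Z))$), and run a Noetherian induction on $\dim S$: an object supported on an open $U$ whose weight-zero generators are Chow motives over $U$ lifts, modulo the closed complement $Z$ (handled by induction), to a Chow motive over $S$. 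Invoking \cite{jin} for the needed spreading-out/resolution input in positive characteristic (where "regular" must substitute for "smooth") is where the argument is most delicate, and this is exactly why the hypothesis that $S$ be a pro-smooth affine limit of varieties is imposed.
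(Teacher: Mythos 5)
Your reduction to the case of a variety via Theorem \ref{pcisdeg}(\ref{icont}) is in the right spirit (though even there you need an extra step: continuity gives $M\cong g^*M_\beta$ for \emph{some} $M_\beta$, not for an $M_\beta$ that you may assume lies in $\dmc(S_\beta)_{w_{\chow}=0}$; one has to descend the weight-zero property to a finite level, e.g.\ by a retract argument). The genuine gap is in the main step. You claim that a weight-zero object is a retract of a direct sum of the generators of Theorem \ref{pwchowa}(2) "concentrated in weight zero", and that the immersion in those generators can then be eliminated by compactifying and using the factor-category description of $\hwchow(U)$. Neither point goes through as stated: the generators of Theorem \ref{pwchowa}(2) involving a nontrivial immersion are \emph{not} weight-zero objects (a $j_!$ of a Chow motive lies only in $\dmcs_{w_{\chow}\le 0}$, a $j_*$ only in $\dmcs_{w_{\chow}\ge 0}$), the heart is extension-stable and not merely retract-stable (Proposition \ref{pbw}(\ref{iext})), so its objects are built from the generators by cones as well as retracts; and the localization statement of Theorem \ref{tfunctwchow}(3a) presents objects of $\hwchow(U)$ as restrictions of objects of $\hwchow(X)$ --- which is only useful if you already know $\hwchow(X)=\chow(X)$, so your Noetherian induction is circular at the top stratum.

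The paper's argument sidesteps all of this. Since $\chows\subset\hwchow(S)$, the category $\chows$ is negative, so by Proposition \ref{pbw}(\ref{igen}) it is the heart of a weight structure $w$ on the triangulated subcategory $T\subset\dmcs$ it generates ($\chows$ being additive and Karoubi-closed, the envelope is $\chows$ itself); by parts \ref{iext} and \ref{iwgen} the inclusion $T\hookrightarrow\dmcs$ is weight-exact, so by the uniqueness statement \ref{iuni} everything reduces to showing $T=\dmcs$, i.e.\ that $\chows$ \emph{generates} $\dmcs$ as a triangulated category. For a variety this is exactly Theorem \ref{pcisdeg}(\ref{igenc}) (Cisinski--D\'eglise, resting on Gabber's alterations), and the pro-smooth affine limit is then handled by continuity plus the fact that such base change preserves Chow motives (Remark \ref{ridmot}(2)). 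This generation statement is the key external input your proposal never invokes, and it is what makes the "elimination of the immersion" problem you correctly flag simply disappear: one never needs to express an individual weight-zero object in terms of the generators.
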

\begin{proof}
Since $\chows\subset \hwchow(S)$, it is negative (by the orthogonality axiom of weight structures). Hence Proposition \ref{pbw}(\ref{igen}) yields the existence
of a weight structure $w$ on the triangulated subcategory $T$  of $\dmcs$ generated by $\chows$ 
 such that $\chows\cong \hw$.
Besides, parts \ref{iext} and \ref{iwgen} of the proposition yield that the embedding  of  $T$ into $ \dmcs$ is weight-exact (with respect to $w$ and $w_{\chow}$). 
Hence part \ref{iuni} of the proposition reduces the assertion to the fact that $T=\dmcs$, i.e., that  $\chows$ generates $\dmcs$.

By Theorem \ref{pcisdeg}(\ref{igenc}) the latter is true if $S$ is a variety itself. Hence it remains to pass to the ("pro-smooth affine") limit in this statement.
By continuity (Theorem \ref{pcisdeg}(\ref{icont}) in order to achieve  it suffices to note that pro-smooth affine morphisms respect Chow motives; the latter is immediate from Remark \ref{ridmot}(2) (along with the fact that pro-smooth base change preserves regularity of schemes).
\end{proof}

\begin{rema}\label{rchow}
1. This argument also shows that we could have considered only projective (regular) $X/S$ in the definition of $\chow(S)$; we would have obtained the same category $\chow(S)$ (at least) when $S$ is as in Proposition \ref{pchow}.

2. Actually, the negativity of $\chow(S)$ in $\dmcs$ (for the "projective version" of the definition)
follows immediately from Lemma \ref{l4onepo}. Thus we could have defined the corresponding "restriction" of $w_{\chow}$ (on a full subcategory of $\dmcs$) without any gluing arguments. Next, restricting ourselves to the case where $\dmcs$ is "Chow-generated", we could have easily applied the arguments used in the proof of  Theorem \ref{tfunctwchow} to this version of the Chow weight structure. This is  (basically) the approach to the study of the Chow weight structures used in
 \cite{hebpo} and \cite{brelmot}*{\S2.1--2.2} (yet some of the methods used in the proof of  Theorem \ref{tfunctwchow} are "newer"; they were developed in \cite{brhtp} and in the current paper). 

We chose not to apply this approach in the current paper since the class of base schemes for which we can use it is too small. The reason for this is that for 
$\Q$-linear motives one only needs certain alterations for the 
corresponding analogue of Theorem \ref{pcisdeg}(\ref{igenc}) (cf. \cite{degcis}*{\S4.1}), whereas for
$\zop$-linear motives one requires the so-called prime-to-$l$ alterations (for all primes $l\neq p$; cf. the proof of \cite{cdintmot}*{Proposition 7.2}) whose existence is only known in the context the aforementioned assertion.

3. Certainly, our proposition does not yield a "full description" of $\hwchow(S)$ since we have not "computed the morphisms" in $\chow(S)$. We note that  the argument used in (the proof of)  \cite{brelmot}*{Lemma 1.1.4(I.1)} allows one  to  
express $\dmcs(\mgbm_S(X)(r)[s], \mgbm_S(X')(r')[s'])$ in terms of certain (Borel-Moore) motivic homology groups; here $r,s,r',s'\in \z$, $X$ and $X'$ are regular schemes that are projective over $S$ (whereas $S$ is "pro-smooth affine" over a variety). Thus we can compute a "substantial part" of morphism groups in  $\chow(S)$. 
Yet computing the composition operation for $\chow(S)$-morphisms is a much more difficult problem; for $R=\Q$ it was recently solved in \cite{jin}.
\end{rema}

\section{Applications to (co)homology of motives and other matters}\label{sapcoh}
In this section we list some immediate applications for our results (following \cite{brelmot}; so a reader well acquainted with ibid. may skip this section completely or just have a look at Proposition \ref{pscholl}). Most of the statements below easily follow from the results of \cite{bws}; there is absolutely no problem to apply the corresponding arguments from  \cite{brelmot}*{\S3} (where the case $R=\mathbb{Q}$ was considered) for their proofs. 
The results seem to be "more interesting" in the case where $S$ is a pro-smooth affine limit of varieties (cf. Proposition \ref{pchow}).
 
\subsection{The weight complex functor for $\dmcs$ and its Grothendieck group} \label{swc}

We note that the weight complex functor (whose 'first ancestor' was defined 
in \cite{gs}) can be defined for $\dmcs$.

\begin{proposition}\label{pwc}\begin{enumerate}
\item The embedding $\hwchows\to K^b(\hwchows)$ factors through a certain  {\it weight complex} functor $t_S:\dmcs\to K^b(\hwchows)$ which is exact and conservative.
\item For $M\in \obj \dmcs$, $i,j\in \Z$, we have $M\in \dmcs_{[i,j]}$ {\nf(}see Definition \ref{dwstr}{\nf(4))} if and only if  $t_S(M)\in K(\hwchows)_{[i,j]}$. 
\end{enumerate}\end{proposition}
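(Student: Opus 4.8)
The plan is to deduce the whole proposition from the general theory of weight complexes for bounded weight structures developed in \cite{bws}, applied to the pair $(\dmcs,w_{\chow})$. The only ingredient particular to our setting is that $w_{\chow}$ is a \emph{bounded} weight structure on $\dmcs$ whose heart is $\hwchows$; this is exactly Theorem \ref{pwchowa}(1). First I would recall the construction of the weight complex. For $M\in\obj\dmcs$ one chooses a weight Postnikov tower, i.e. a compatible family of weight decompositions; by boundedness it is finite, and its graded pieces are objects of $\dmcs_{w_{\chow}=l}$, that is, $l$-fold shifts of objects of $\hwchows$. Placing the corresponding heart objects in degree $l$ and taking the connecting morphisms as differentials yields a bounded complex $t_S(M)$ over $\hwchows$; the orthogonality axiom guarantees that it is well-defined up to a canonical homotopy equivalence, so that $t_S$ descends to a functor $\dmcs\to K^b(\hwchows)$.

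Next I would record the formal properties asserted in part (1). That $t_S$ extends the embedding $\hwchows\to K^b(\hwchows)$ is immediate, since an object of the heart is its own single-term weight complex. Exactness --- compatibility with shifts and the fact that distinguished triangles of $\dmcs$ are sent to distinguished triangles of $K^b(\hwchows)$ --- is the general statement of \cite{bws}. Conservativity follows from boundedness and is closely tied to part (2): if $t_S(M)$ is contractible then $M$ lies in $\dmcs_{[i,j]}$ for arbitrarily "thin" ranges, which forces $M=0$. All of these are specializations to $(\dmcs,w_{\chow})$ of the corresponding general results, the $R=\Q$ analogue being \cite{brelmot}*{\S3}.

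For part (2), one implication is built into the construction: if $M\in\dmcs_{[i,j]}$ then a weight Postnikov tower of $M$ can be chosen with all graded pieces of weights in $\{i,\dots,j\}$, whence $t_S(M)\in K(\hwchows)_{[i,j]}$. For the converse, given a weight decomposition $B\to M\to A\to B[1]$ with $B\in\dmcs_{w_{\chow}\le j}$ and $A\in\dmcs_{w_{\chow}\ge j+1}$, I would apply $t_S$ and use exactness together with the orthogonality $K(\hwchows)_{w\le j}\perp K(\hwchows)_{w\ge j+1}$ for the stupid weight structure on $K^b(\hwchows)$ to conclude $t_S(A)=0$; conservativity then gives $A=0$, so $M\in\dmcs_{w_{\chow}\le j}$. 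The bound $M\in\dmcs_{w_{\chow}\ge i}$ is obtained dually via Proposition \ref{pbw}(\ref{idual}). Again this is the specialization of the general statement of \cite{bws}.

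The main obstacle I anticipate lies entirely in the general theory rather than in our category: upgrading the a priori only \emph{weakly} exact weight complex assignment to a genuinely exact functor valued in $K^b(\hwchows)$ requires the careful homotopy bookkeeping of \cite{bws} (the choices of weight decompositions, and the homotopies relating the complexes arising from different choices). Once that machinery is invoked, every step above is formal and uses only the boundedness of $w_{\chow}$ established in Theorem \ref{pwchowa}.
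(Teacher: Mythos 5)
Your proposal is correct and takes essentially the same route as the paper, which in fact gives no separate proof: it only remarks that the statements of this section "easily follow from the results of \cite{bws}" by repeating the arguments of \cite{brelmot}*{\S 3}, the sole input specific to the present setting being the boundedness of $w_{\chow}$ from Theorem \ref{pwchowa}(1). Your fleshing-out of the Postnikov-tower construction, the conservativity argument, and the two implications in part (2) is consistent with that general theory (including your correct identification of the weak-versus-strong weight complex issue as the only delicate point, which is resolved by the existence of an enhancement for $\dmcs$ as in \cite{bws} and \cite{brelmot}).
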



Now we calculate $K_0(\dmcs)$ and define a certain Euler characteristic for schemes that are  of finite type (and separated) over $S$.

\begin{proposition}\label{pkz}\begin{enumerate}
\item We define $K_0(\hwchows)$  as the abelian group whose generators are $[M]$, $M\in  \dmcs_{w_{\chow}=0}$, and the relations are $[B]=[A]+[C]$ if
$A,B,C\in  \dmcs_{w_{\chow}=0}$ and $B\cong A\bigoplus C$.
For $K_0(\dmcs)$ we take similar generators and  set $[B]=[A]+[C]$ if $A\to B\to C\to A[1]$ is a distinguished triangle.

\noindent
Then the embedding $\hwchows\to \dmcs$  yields an isomorphism $K_0(\hwchows)\cong K_0(\dmcs)$.

\item For the correspondence $\chi:X\mapsto [\mgbms(X)]$ 
 from the class of schemes  separated of finite type over $S$ to $K_0(\dmcs)\cong K_0(\hwchows)$ we have: $\chi(X\setminus Z)=\chi(X)-\chi(Z)$ if $Z$ is a closed subscheme of $X$.
\end{enumerate}
\end{proposition}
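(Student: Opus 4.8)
The plan is to deduce both assertions from the general theory of \cite{bws}, with the weight complex functor of Proposition \ref{pwc} doing the main work in part (1); part (2) will then be a short computation using the gluing triangle.

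For part (1), let $\phi:K_0(\hwchows)\to K_0(\dmcs)$ be the homomorphism induced by the embedding $\hwchows\to\dmcs$; it is well-defined since a split triangle $A\to A\oplus C\to C$ in the heart is distinguished in $\dmcs$. First I would check that $\phi$ is surjective. Since $w_{\chow}$ is bounded (Theorem \ref{pwchowa}(1)), any $M\in\obj\dmcs$ lies in $\dmcs_{[a,b]}$ for some $a\le b$. Arguing by induction on $b-a$, I would use a weight decomposition triangle $w_{\le b-1}M\to M\to M_{w=b}$ with $M_{w=b}\in\dmcs_{w_{\chow}=b}$ and $w_{\le b-1}M\in\dmcs_{[a,b-1]}$, together with the relation $[M[1]]=-[M]$ in $K_0(\dmcs)$ (coming from the triangle $M\to 0\to M[1]$); this exhibits $[M]$ as an integral combination of (shifts of) classes of heart objects, hence as an element of the image of $\phi$.

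To produce an inverse I would combine $t_S$ with the Euler characteristic map $\chi:K^b(\hwchows)\to K_0(\hwchows)$, $C\mapsto\sum_i(-1)^i[C^i]$, and set $\psi=\chi\circ t_S$ on generators. The key point is that $\psi$ descends to $K_0(\dmcs)$: this reduces to the facts that every distinguished triangle in the homotopy category $K^b(\hwchows)$ is isomorphic to one coming from a termwise split short exact sequence of complexes (so that $\chi$ is additive on distinguished triangles and invariant under homotopy), and that $t_S$ is exact by Proposition \ref{pwc}. Since $t_S$ sends a heart object $M\in\dmcs_{w_{\chow}=0}$ to the complex concentrated in degree $0$, one gets $\psi\circ\phi=\id_{K_0(\hwchows)}$; combined with the surjectivity already shown, this proves that $\phi$ is an isomorphism, as claimed.

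For part (2), the class $[\mgbms(X)]=[g_!R_X]$ (where $g:X\to S$ is the structure morphism) is a well-defined element of $K_0(\dmcs)$, and the target is identified with $K_0(\hwchows)$ by part (1). Given a closed subscheme $Z\subset X$ with open complement $U=X\setminus Z$, with immersions $i:Z\to X$ and $j:U\to X$, I would apply $g_!$ to the gluing triangle $j_!R_U\to R_X\to i_*R_Z$ of Theorem \ref{pcisdeg}(\ref{iglu}) (using $j^*R_X=R_U$, $i^*R_X=R_Z$, $j^!=j^*$, and $i_*\cong i_!$ by Theorem \ref{pcisdeg}(\ref{ipur})). Since $g_!j_!=(g\circ j)_!$ and $g_!i_*=g_!i_!=(g\circ i)_!$, this yields a distinguished triangle $\mgbms(U)\to\mgbms(X)\to\mgbms(Z)\to\mgbms(U)[1]$ in $\dmcs$ --- exactly the triangle (\ref{emgys}) used in the proof of Lemma \ref{filtr}(2). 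The defining relation of $K_0(\dmcs)$ then gives $\chi(X)=\chi(U)+\chi(Z)$, i.e.\ $\chi(X\setminus Z)=\chi(X)-\chi(Z)$. The only genuinely nontrivial step in the whole argument is the well-definedness of $\psi$, i.e.\ that taking the alternating sum of the terms of the weight complex respects distinguished triangles; everything else (surjectivity of $\phi$ and the whole of part (2)) is formal once the weight complex functor and the gluing triangle are in hand.
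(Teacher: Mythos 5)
Your proposal is correct and follows essentially the approach the paper intends: the paper gives no explicit argument for Proposition \ref{pkz}, deferring to \cite{bws} and \cite{brelmot}*{\S3}, and the standard proof there is exactly yours --- surjectivity of $K_0(\hwchows)\to K_0(\dmcs)$ from boundedness of $w_{\chow}$ and weight decompositions, the inverse via the Euler characteristic composed with the weight complex functor $t_S$ of Proposition \ref{pwc}, and part (2) from the Borel--Moore gluing triangle (\ref{emgys}). The only point worth making explicit is that the homotopy invariance of the alternating sum uses that $\hwchows$ is Karoubian (so contractible bounded complexes split into discs), which holds since it is Karoubi-closed in the idempotent complete category $\dmcs$.
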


\subsection{On Chow-weight spectral sequences and filtrations}\label{schws}

Now we discuss (Chow)-weight spectral sequences and filtrations for 
cohomology of motives.  Certainly, here one can pass to homology via obvious dualization (see Proposition\ref{pbw}(\ref{idual})).
We note that any weight structure yields certain weight spectral sequences for any (co)homology theory; the main distinction of the result below from the general case (i.e., from \cite{bws}*{Theorem 2.4.2}) is that $T(H,M)$ always converges  (since  $w_{\chow}$ is bounded). 

\begin{proposition}\label{pwss}
Let $\underline{A}$ be an abelian category.\begin{enumerate}
\item Let $H:\dmcs\to \underline{A}$ be a cohomological functor{\nf\/;} for any $r\in \Z$ denote $H\circ [-r]$ by $H^r$.
For an $M\in \obj\dmcs$ we denote by $(M^i)$ the terms of $t_S(M)$ {\nf(}so, $M^i\in  \dmcs_{w_{\chow}=0};$ here we can take any possible choice of $t_S(M))$. 

Then the following statements are valid.
\begin{enumerate}
\item There exists a \emph{(Chow-weight)}  spectral sequence $T=T(H,M)$ with $E_1^{pq}=H^q(M^{-p})\implies H^{p+q}(M);$ the differentials for $E_1(T(H,M))$ come from $t_S(M)$.
\item $T(H,M)$ is $\dmcs$-functorial in $M$ {\nf(}and does not depend on any choices{\nf)} starting from $E_2$.
\end{enumerate}
\item Let $G:\dmcs\to \underline{A}$ be any contravariant functor.
Then for any $m\in \Z$ the object $(W^{m}G)(M)=\imm (G(w_{\chow,\ge m}M)\to G(M))$ does not depend on the choice of $w_{\chow,\ge m}M;$ it is functorial in $M$.
\noindent
We call the filtration of $G(M)$ by $(W^{m}G)(M)$ its {\it Chow-weight} filtration. If $G$ is cohomological, it coincides with the filtration given by $T(G,M)$.

\end{enumerate}\end{proposition}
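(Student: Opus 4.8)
The plan is to deduce the whole statement from the general theory of weight structures developed in \cite{bws}, since the substantive work has already been done: by Theorem \ref{pwchowa} the pair $(\dmcs_{w_{\chow}\ge 0},\dmcs_{w_{\chow}\le 0})$ is a \emph{bounded} weight structure, and by Proposition \ref{pwc} the associated weight complex functor $t_S:\dmcs\to K^b(\hwchows)$ is available, exact, and conservative. The only feature of the present setting that is not purely formal is the boundedness of $w_{\chow}$, and this is exactly what upgrades the general construction of \cite{bws}*{Theorem 2.4.2} (where convergence is an issue) to the convergent spectral sequence asserted here.

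For part 1, I would first recall that boundedness guarantees that every $M\in\obj\dmcs$ admits a finite \emph{weight Postnikov tower}: a tower of weight truncations $w_{\chow,\le i}M$ joined by the canonical morphisms, whose successive cones are, up to shift, the weight-complex terms $M^i\in\dmcs_{w_{\chow}=0}$. Applying the cohomological functor $H$ to this tower yields an exact couple in the standard way, hence a spectral sequence $T(H,M)$; identifying $E_1^{pq}=H^q(M^{-p})$ and checking that the $E_1$-differentials are those induced by $t_S(M)$ is then a direct unwinding of the definitions, precisely as in \cite{bws}. Convergence to $H^{p+q}(M)$ is immediate because the tower has only finitely many nonzero layers. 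For the functoriality-from-$E_2$ claim I would invoke the fact, again from \cite{bws}, that although the tower (and with it the $E_1$-page) depends on the non-canonical choices of weight decompositions, the class $t_S(M)\in K^b(\hwchows)$ is well-defined up to homotopy; any two towers for a fixed $M$ are linked by a morphism of towers inducing an isomorphism from $E_2$ onward, and a $\dmcs$-morphism lifts compatibly to a morphism of towers, which delivers the asserted functoriality beyond $E_1$.

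For part 2, I would define $(W^{m}G)(M)$ as in the statement and prove independence of the choice of $w_{\chow,\ge m}M$ by the usual weight-decomposition argument. Two choices are connected by morphisms over $M$ arising from the orthogonality $\dmcs_{w_{\chow}\le m-1}\perp\dmcs_{w_{\chow}\ge m}$ (cf. Proposition \ref{pbw}(\ref{iort})); these morphisms need not be isomorphisms, but they become compatible after factoring through $M$, so the two images in $G(M)$ coincide, and functoriality in $M$ follows in the same manner. The compatibility of this filtration with the one produced by $T(G,M)$ when $G$ is cohomological is then a formal consequence of the general comparison in \cite{bws}, once the Postnikov-tower description of $W^{m}$ is matched with the truncation filtration defining the spectral sequence.

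The hard part, such as it is, will not be any single computation but the bookkeeping needed to verify that all choices wash out from $E_2$ (respectively, from the filtration) onward. This is entirely a matter of transcribing the arguments of \cite{bws} into the present relative context, the nontrivial geometric content having already been absorbed into Theorem \ref{pwchowa} and Proposition \ref{pwc}; accordingly I would keep this proof short and largely referential.
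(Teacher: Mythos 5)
Your proposal is correct and follows exactly the route the paper itself takes: the paper gives no independent proof, simply invoking the general weight-structure machinery of \cite{bws}*{Theorem 2.4.2} (and the transcription in \cite{brelmot}*{\S3}) applied to the bounded weight structure of Theorem \ref{pwchowa} and the weight complex functor of Proposition \ref{pwc}, with boundedness supplying convergence. Your added detail on the weight Postnikov tower, the exact couple, and the orthogonality argument for the independence of $(W^mG)(M)$ is a faithful unwinding of those cited arguments rather than a different approach.
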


\begin{rema}\label{rintel}
1.  We obtain certain ("motivically functorial") {\it Chow-weight} spectral sequences and
filtrations for any (co)homology of motives. In particular, we have them
for  \'etale and motivic cohomology of $S$-motives (with coefficients in an $R$-algebra).
 The corresponding functoriality results  cannot be proved using 'classical' (i.e., Deligne's) methods, since the latter heavily rely on the degeneration of (an analogue of) $T$ at $E_2$. 

On the other hand, we probably do not have the "strict functoriality" property for this filtration
unless $T(H,M)$ degenerates for any $M\in \obj \dmcs$ (see \cite{btrans}*{Proposition 3.1.2(II)}), 
whereas this degeneration is only known to hold for "more or less classical" $\mathbb{Q}$-linear cohomology theories only. 

2. $T(H,M)$ can be naturally described in terms of the {virtual $t$-truncations} of $H$
(starting from $E_2$); see \cite{bws}*{\S2.5} and \cite{brelmot}.

3. For $S$ being   a pro-smooth affine limit of varieties (cf. \S\ref{stmain})
we obtain that the (co)homology of any $M\in \obj\dmcs$ possesses a filtration by subfactors of (co)homology of regular projective $S$-schemes.

4. The arguments in \cite{cdet}*{\S7.2} easily yield (for $R\subset \Q$, $p\neq l\in \p$) the existence of a (covariant) \'etale realization functor from $\dmcs$ into the corresponding  category of Ekedahl's  (\'etale, constructible) ${\mathbb{Q}}_l$-adic  systems. 

Note next that for $S$ being a variety over $k$ and $p>0$   the target category is endowed with a certain weight filtration (that was defined in \cite{bbd}*{\S5}); for $p=0$ one can factor the realization functor mentioned through a certain category that is endowed with certain "weights" also (and was defined by A. Huber; see \cite{bmm}*{Proposition 2.5.1}). Moreover (see loc. cit.) in both cases the corresponding functors "respect weights". So, we obtain that $w_{\chow}$ is closely related to Deligne's weights for constructible complexes of sheaves (that were crucial for defining "weights" in \cite{huper} also). Yet (as explained in \cite{bmm}*{Remark 2.5.2}) the weight filtrations on the "\'etale categories" mentioned do not yield weight structures for them.  

\end{rema}

The functoriality of Chow-weight filtrations has quite interesting consequences.

\begin{proposition}\label{pscholl}
For a scheme $X$ let $H:\dmcx\to \underline{A}$ be a contravariant functor. 
For all $N\in  \dmc(U)_{w_{\chow} \le 0}$ consider $G(N)= (W^0H)(j_!(N))\subset H(j_!(N))$.
\begin{enumerate}
\item The following statements are fulfilled.
\begin{enumerate}
\item $G(N)$ is $\dmc(U)$-functorial in $N$.

\item 
 $G(N)$ is a quotient of $H(M)$ for some $M\in \dmcx_{w_{\chow}=0}$.
\end{enumerate}
\item Let now $N=j^*(M)(=j^!(M))$ for $M\in \dmcx_{w_{\chow}=0}$. Then also the following is true.
\begin{enumerate}
\item 
$G(N)=\imm H(M)\to H(j_!j^!(M))$  {\nf(}here we apply $H$ to the morphism $j_!j^!(M)\to M$ coming from the adjunction $j_!\dashv 
j^!=j^*${\nf)}.
\item Let $H=\dmcx(-,\mathbb{H})$ for some $\mathbb{H}\in \obj \dmx$. 
Then  $G(N)\cong\break \imm(\dmx(M, \mathbb{H})\to \dm(U)(N, j^*(\mathbb{H})))$. \end{enumerate}
\end{enumerate}\end{proposition}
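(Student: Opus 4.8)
The plan is to treat $G$ as the composite $(W^0H)\circ j_!$ and to lean on the two features of $(W^0H)$ established in Proposition \ref{pwss}(2): its functoriality in the argument and its independence of the choice of weight truncation. All the weight-exactness inputs I need are in Theorem \ref{tfunctwchow}(2).

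\textbf{Part 1.} First I would note that $j_!$ is left weight-exact (Theorem \ref{tfunctwchow}(2a)), so $N\in\dmc(U)_{w_{\chow}\le 0}$ gives $j_!(N)\in\dmcx_{w_{\chow}\le 0}$. Since $(W^0H)$ is $\dmcx$-functorial and $j_!$ is a functor, $G=(W^0H)\circ j_!$ is $\dmc(U)$-functorial; this is 1(a). For 1(b) I would fix a weight decomposition $w_{\chow,\le -1}(j_!N)\to j_!N\to w_{\chow,\ge 0}(j_!N)$ and set $M:=w_{\chow,\ge 0}(j_!N)$. The point is that $M\in\dmcx_{w_{\chow}=0}$: rotating the decomposition triangle to $j_!N\to M\to w_{\chow,\le -1}(j_!N)[1]$, both outer terms lie in $\dmcx_{w_{\chow}\le 0}$ (the third because $w_{\chow,\le -1}(j_!N)[1]\in\dmcx_{w_{\chow}\le -2}$), so extension-stability of $\dmcx_{w_{\chow}\le 0}$ (Proposition \ref{pbw}(\ref{iext})) forces $M\in\dmcx_{w_{\chow}\le 0}$, whence $M\in\dmcx_{w_{\chow}=0}$. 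By the very definition of $(W^0H)$ we get $G(N)=\imm(H(M)\to H(j_!N))$, a quotient of $H(M)$.

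\textbf{Part 2(a).} Here the crux is to recognise the gluing triangle (Theorem \ref{pcisdeg}(\ref{iglu}))
$$j_!j^!(M)\to M\to i_*i^*(M)\to j_!j^!(M)[1]$$
as a weight decomposition of $j_!j^!(M)$ in which $M$ is a choice of $w_{\chow,\ge 0}(j_!j^!(M))$. Indeed $M\in\dmcx_{w_{\chow}\ge 0}$; and since $i^*$ is left weight-exact while $i_*\cong i_!$ is weight-exact (Theorem \ref{tfunctwchow}(2a)), we have $i^*(M)\in\dmc(Z)_{w_{\chow}\le 0}$ and hence $i_*i^*(M)\in\dmcx_{w_{\chow}\le 0}$, so that $i_*i^*(M)[-1]\in\dmcx_{w_{\chow}\le -1}$ is a legitimate weight-$\le -1$ part. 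The choice-independence of $(W^0H)$ (Proposition \ref{pwss}(2)) then gives $G(N)=(W^0H)(j_!j^!M)=\imm(H(M)\to H(j_!j^!M))$, the morphism being $H$ applied to the counit $j_!j^!(M)\to M$, as claimed.

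\textbf{Part 2(b).} Finally, for $H=\dmcx(-,\mathbb{H})$ I would rewrite both terms of 2(a) by adjunction: $H(M)=\dmx(M,\mathbb{H})$, and the adjunction $j_!\dashv j^!=j^*$ together with $j^!\mathbb{H}=j^*\mathbb{H}$ gives $H(j_!j^!M)=\dmx(j_!j^!M,\mathbb{H})\cong\dm(U)(N,j^*\mathbb{H})$. The triangle identity for this adjunction identifies precomposition with the counit with the restriction map $g\mapsto j^*(g)$, yielding $G(N)\cong\imm(\dmx(M,\mathbb{H})\to\dm(U)(N,j^*\mathbb{H}))$. The main obstacle is genuinely the identification in 2(a): one must check that the gluing triangle has its three terms in the correct weight slots and that the counit sits in the right position; once that is in place, the rest is weight-structure formalism plus adjunction bookkeeping.
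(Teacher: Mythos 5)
Your argument is correct and is essentially the intended one: the paper gives no proof of this proposition, deferring to the $R=\Q$ arguments of \cite{brelmot}*{\S3}, and what you write out — recognizing the rotated gluing triangle $i_*i^*(M)[-1]\to j_!j^!(M)\to M$ as a weight decomposition of $j_!j^!(M)$ and invoking the functoriality and choice-independence of $(W^0H)$ from Proposition \ref{pwss}(2) — is exactly that argument. One small correction in 1(b): the shift goes the other way, i.e. $w_{\chow,\le -1}(j_!N)[1]$ lies in $\dmcx_{w_{\chow}\le 0}$ (since $\cu_{w\le -1}[1]=\cu_{w\le 0}$), not in $\dmcx_{w_{\chow}\le -2}$; this is precisely what extension-stability requires, so your conclusion that $M\in \dmcx_{w_{\chow}=0}$ is unaffected.
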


\begin{rema}
1.  Thus one may say that  $(W^0H)(j_!(N))$ yields the 'integral part' of  $H(j_!(N))$: we obtain the subobject of  $H^*(j_!(N))$ that  'comes from any nice $X$-lift' of $N$ if the latter exists, and a factor of $H(M)$ for some $M\in \dmcx_{w_{\chow}=0}$ in general;
cf. \cite{bei85} and \cite{scholl}. If $N=\mgbm(T_U)$ for a regular $T_U$ 
proper over $U$, then any regular proper "$X$-model" $T_X$ for $T_U$ yields such a "nice $X$-lift" for $N$.
Besides, in the setting of assertion 2b one can take $\mathbb{H}$ representing \'etale or motivic cohomology (for an appropriate choice of coefficients and some fixed degree; so that $\dms(R_Y,f^*(\mathbb{H}))$ is the corresponding cohomology of $Y$ for any $f:Y\to X$); then $G(N)$ will be the image of this cohomology of $T_X$ in the one of $T_U$.

In this case one can also 
study the cohomology of an object $N_K$ of $ \chow(K)$ (i.e., of an element of $\dmc(K)_{w_{\chow}=0}$) for $K$ being some generic point of $X$. Indeed,  any such $N_K$ can be lifted to a Chow motif $N$ over some $U$ ($K\in U$, $U$ is open in $X$) by 
Remark \ref{ridmot}(2)   combined with 
Theorem \ref{pcisdeg}(\ref{icont}) (since if $N_K$ is a retract of $\mgbm_K(T_K)$ for some regular variety $T_K/K$, then $T_K$ along with this retraction  can be lifted  to a regular $T_U$ over some open $U\subset X$ that contains $K$). 
Note that this construction enjoys 'the usual' $\chow(K)$-functoriality; this is an easy consequence of  Theorem \ref{pcisdeg}(\ref{icont}).

2. It could also be interesting to consider $W^lH^*(j_!(N))$ for $l< 0$.
\end{rema}
\begin{bibdiv}
\begin{biblist}[\normalsize]
\bib{bbd}{article}{
   label={BBD82},
   author={Be{\u\i}linson, A. A.},
   author={Bernstein, J.},
   author={Deligne, P.},
   title={Faisceaux pervers},
   language={French},
   conference={
      title={Analysis and topology on singular spaces, I},
      address={Luminy},
      date={1981},
   },
   book={
      series={Ast\'erisque},
      volume={100},
      publisher={Soc. Math. France, Paris},
   },
   date={1982},
   pages={5--171},
   review={\MR{751966 (86g:32015)}},
}

 \bib{bei85}{article}{
   author={Be{\u\i}linson, A. A.},
   title={Higher regulators and values of $L$-functions},
   language={Russian},
   conference={
      title={Current problems in mathematics, Vol. 24},
   },
   book={
      series={Itogi Nauki i Tekhniki},
      publisher={Akad. Nauk SSSR, Vsesoyuz. Inst. Nauchn. i Tekhn. Inform.,
   Moscow},
   },
   date={1984},
   pages={181--238},
   review={\MR{760999 (86h:11103)}},
}
\bib{bws}{article}{
   label={Bon10},
   author={Bondarko, M. V.},
   title={Weight structures vs. $t$-structures; weight filtrations, spectral
   sequences, and complexes (for motives and in general)},
   journal={J. K-Theory},
   volume={6},
   date={2010},
   number={3},
   pages={387--504},
	 issn={1865-2433},
   review={\MR{2746283}},
   doi={10.1017/is010012005jkt083},
	note={see also \url{http://arxiv.org/abs/0704.4003}}
}

\bib{bmres}{article}{
   label={Bon11},
   author={Bondarko, M. V.},
   title={$\mathbb Z[1/p]$-motivic resolution of singularities},
   journal={Compos. Math.},
   volume={147},
   date={2011},
   number={5},
   pages={1434--1446},
   issn={0010-437X},
   review={\MR{2834727 (2012i:14009)}},
   doi={10.1112/S0010437X11005410},
}
\bib{btrans}{article}{
   label={Bon12},
   author={Bondarko, M. V.},
   title={Weight structures and 'weights' on the hearts of $t$-structures},
   journal={Homology, Homotopy and Applications},
   volume={14},
   date={2012},
   number={1},
   pages={239--261},
   issn={0010-437X},
   review={\MR{2954675 
	}},
   doi={10.4310/HHA.2012.v14.n1.a12},
}

\bib{bondint}{article}{
   label={Bon13},
   author = {Bondarko,  M. V.},
   title = {On weights for relative motives with integral coefficients},
   journal = {ArXiv e-prints},
   eprint = {http://arxiv.org/abs/0912.2110},
   year = {2013},
		}

\bib{brelmot}{article}{
   label={Bon14a},
   author={Bondarko, M. V.},
   title={Weights for relative motives: relation with mixed complexes of
   sheaves},
   journal={Int. Math. Res. Not. IMRN},
   date={2014},
   number={17},
   pages={4715--4767},
   issn={1073-7928},
   review={\MR{3257549}}, 
	 note={see also \url{http://arxiv.org/abs/1007.4543}}
}

\bib{brhtp}{article}{
   label={Bon14b},
   author = {Bondarko, M. V.},
	 title = {On  perverse homotopy $t$-structures,  coniveau spectral sequences, cycle modules, and relative Gersten weight structures},
   journal = {ArXiv e-prints},
   eprint = {http://arxiv.org/abs/1409.0525},
   year = {2014},
		}

\bib{bmm}{article}{
   label={Bon15},
   author = {Bondarko, M. V.},
	 title ={Mixed motivic sheaves (and weights for them) exist if 'ordinary' mixed motives do},
	journal ={Compos. Math.},
	 doi={ http://dx.doi.org/10.1112/S0010437X14007763},
	 note={see also \url{http://arxiv.org/abs/1105.0420}},
	 volume={151},
	date={2015},
	 pages={917--956},}

\bib{bosos}{article}{
   label={BoS15},
   author = {Bondarko, M. V.},
	author = {Sosnilo, V. A.},
	 title = {Weight structures in localizations (revisited) and the weight lifting property},
   status = {in preparation},
   year = {2015},
		}

\bib{degcis}{article}{
   label={CiD09},
   author = {Cisinski, D.-C.},
	 author={D{\'e}glise, F.},
   title = {Triangulated categories of mixed motives},
   journal = {ArXiv e-prints},
   eprint = {http://arxiv.org/abs/0912.2110},
   year = {2009},
		}
		\bib{cdet}{article}{
		label={CiD13},
   author = {Cisinski, D.-C.},
	 author={D{\'e}glise, F.},
   title = {\'Etale motives},
   journal = {ArXiv e-prints},
   eprint = {http://arxiv.org/abs/1305.5361},
   year = {2013},
	note= {to appear in Comp. Math.}
		}
\bib{cdintmot}{article}{
   label={CiD14},
   author = {Cisinski, D.-C.},
	 author={D{\'e}glise, F.},
   title = {Integral mixed motives in equal characteristic},
   journal = {ArXiv e-prints},
   eprint = {http://arxiv.org/abs/1410.6359},
   year = {2014},
	note= {to appear in Doc. Math.}
		}

\bib{degchz}{article}{
   label={Deg14},
   author={D{\'e}glise, F.},
   title = {On the homotopy heart of mixed motives},
   eprint = {http://perso.ens-lyon.fr/frederic.deglise/docs/2014/modhtpb.pdf},
   year = {2014},
		}


\bib{gs}{article}{
   label={GiS96},
   author={Gillet, H.},
   author={Soul\'e, C.},
   title={Descent, motives and $K$-theory},
   journal={J. Reine Angew. Math.},
   volume={478},
   date={1996},
   pages={127--176},
   issn={0075-4102},
   review={\MR{1409056 (98d:14012)}},
   doi={10.1515/crll.1996.478.127},
}

%
%
\bib{hebpo}{article}{
   label={Heb11},
   author={H\'ebert, David},
   title={Structure de poids \`a la Bondarko sur les motifs de Beilinson},
   language={French, with English summary},
   journal={Compos. Math.},
   volume={147},
   date={2011},
   number={5},
   pages={1447--1462},
   issn={0010-437X},
   review={\MR{2834728 (2012i:14026)}},
   doi={10.1112/S0010437X11005422},
}

\bib{huper}{article}{
   author={Huber, Annette},
   title={Mixed perverse sheaves for schemes over number fields},
   journal={Compositio Math.},
   volume={108},
   date={1997},
   number={1},
   pages={107--121},
   issn={0010-437X},
   review={\MR{1458759 (98k:14024)}},
   doi={10.1023/A:1000273606373},
}

\bib{jin}{article}{
   label={Jin15},
   author={Jin, F.},
   title = {Borel-Moore motivic homology and weight structure on mixed motives},
	 journal = {ArXiv e-prints},
   eprint = {http://arxiv.org/abs/1502.03956},
   year = {2015},
		}

\bib{lemm}{book}{
   author={Levine, Marc},
   title={Mixed motives},
   series={Mathematical Surveys and Monographs},
   volume={57},
   publisher={American Mathematical Society, Providence, RI},
   date={1998},
   pages={x+515},
   isbn={0-8218-0785-4},
   review={\MR{1623774 (99i:14025)}},
   doi={10.1090/surv/057},
}

\bib{neebook}{book}{
   label={Nee01},
   author={Neeman, Amnon},
   title={Triangulated categories},
   series={Annals of Mathematics Studies},
   volume={148},
   publisher={Princeton University Press, Princeton, NJ},
   date={2001},
   pages={viii+449},
   isbn={0-691-08685-0},
   isbn={0-691-08686-9},
   review={\MR{1812507 (2001k:18010)}},
}
%
%

\bib{scholl}{article}{
   author={Scholl, Anthony J.},
   title={Integral elements in $K$-theory and products of modular curves},
   conference={
      title={The arithmetic and geometry of algebraic cycles},
      address={Banff, AB},
      date={1998},
   },
   book={
      series={NATO Sci. Ser. C Math. Phys. Sci.},
      volume={548},
      publisher={Kluwer Acad. Publ., Dordrecht},
   },
   date={2000},
   pages={467--489},
   review={\MR{1744957 (2001i:11077)}},
}
\bib{VSF}{collection}{
   author={Voevodsky, Vladimir},
   author={Suslin, Andrei},
   author={Friedlander, Eric M.},
   title={Cycles, transfers, and motivic homology theories},
   series={Annals of Mathematics Studies},
   volume={143},
   publisher={Princeton University Press, Princeton, NJ},
   date={2000},
   pages={vi+254},
   isbn={0-691-04814-2},
   isbn={0-691-04815-0},
   review={\MR{1764197 (2001d:14026)}},
}

\bib{wildic}{article}{
   author={Wildeshaus, J{\"o}rg},
   title={Motivic intersection complex},
   conference={
      title={Regulators},
   },
   book={
      series={Contemp. Math.},
      volume={571},
      publisher={Amer. Math. Soc., Providence, RI},
   },
   date={2012},
   pages={255--276},
   review={\MR{2953419}},
   doi={10.1090/conm/571/11332},
}
\end{biblist}
\end{bibdiv}
\end{document}